\newcommand{\gjoin}{\mathcal G^\&}
\newcommand{\gsub}{\mathcal G^*}
\newcommand\abs[1]{\lvert #1\rvert}
\newcommand\tri{\boxslash}
\newcommand\pivot\wedge
\def\K_#1{{K_{#1}}}
\def\S_#1{\overline{K_{#1}}}
\newtheorem{THM}{Theorem}[section]
\newtheorem*{THMMAIN1}{Theorem~\ref{thm:pathhalf}}
\newtheorem{LEM}[THM]{Lemma}
\newtheorem{COR}[THM]{Corollary}
\newtheorem{PROP}[THM]{Proposition}
\newtheorem{CON}[THM]{Conjecture}
\newtheorem{QUE}[THM]{Question}
\author{Ringi Kim\thanks{Supported by the National Research Foundation of Korea (NRF) grant funded by the Korea government (MSIT) (No. NRF-2018R1C1B6003786).}
  \\
  Department of Mathematical Sciences, KAIST
  \and
  O-joung Kwon\thanks{Supported by the National Research Foundation of Korea (NRF) grant funded by the Ministry of Education (No. NRF-2018R1D1A1B07050294).}\\
  Department of Mathematics, Incheon National University
  \and
  Sang-il Oum\thanks{Supported by IBS-R029-C1 and by the National Research Foundation of Korea (NRF) grant funded by the Korea government (MSIT) (No. NRF-2017R1A2B4005020).}\\
  Discrete Mathematics Group, Institute for Basic Science (IBS)\\
  Department of Mathematical Sciences, KAIST
  \and
  Vaidy Sivaraman\\
  Department of Mathematics, University of Central Florida}
\title{Classes of graphs with no long cycle as a vertex-minor are
  polynomially {$\chi$}-bounded}
\begin{document}
\maketitle
\footnotetext[1]{E-Mail addresses: \texttt{ringikim2@gmail.com}, \texttt{ojoungkwon@gmail.com}, \texttt{sangil@ibs.re.kr}, \texttt{vaidysivaraman@gmail.com}}
\begin{abstract}
  A class $\mathcal G$ of graphs is \emph{$\chi$-bounded}
  if there is a function $f$ such that
  for every graph $G\in \mathcal G$
  and every induced subgraph $H$ of $G$,
  $\chi(H)\le f(\omega(H))$.
  In addition, we say that $\mathcal G$ is \emph{polynomially $\chi$-bounded} if $f$ can be taken as a polynomial function.
  We prove that for every integer $n\ge3$, there exists a polynomial $f$
  such that $\chi(G)\le f(\omega(G))$ for all graphs with no vertex-minor isomorphic to the cycle graph $C_n$.
  To prove this, we show that if $\mathcal G$ is polynomially $\chi$-bounded, then
  so is the closure of $\mathcal G$ under taking the $1$-join operation.
\end{abstract}
\section{Introduction}\label{sec:intro}
A class $\mathcal{G}$ of graphs is said to be \emph{hereditary} if for every $G\in \mathcal{G}$, every graph isomorphic to an induced subgraph of $G$ belongs to $\mathcal{G}$.
  A class $\mathcal G$ of graphs is \emph{$\chi$-bounded}
  if there is a function $f$ such that
  for every graph $G\in \mathcal G$
  and every induced subgraph $H$ of $G$,
  $\chi(H)\le f(\omega(H))$. The function $f$ is called a \emph{$\chi$-bounding function}.
  This concept was first formulated by Gy\'{a}rf\'{a}s~\cite{Gyarfas1987}.  
  In particular, we say that $\mathcal G$ is \emph{polynomially $\chi$-bounded} if $f$ can be taken as a polynomial function.

  Recently, many open problems on $\chi$-boundedness
  have been resolved; see a recent survey by Scott and Seymour~\cite{surveychi}.
  Yet we do not have much information on graph classes that are polynomially $\chi$-bounded.
  For instance,
  Gy\'{a}rf\'as~\cite{Gyarfas1987} showed that
  the class of $P_n$-free graphs is $\chi$-bounded
  but 
  it is still open~\cite{ELMM2013,Schiermeyer2016}
  whether it is polynomially $\chi$-bounded for $n\ge 5$. 
  Regarding polynomially $\chi$-boundedness, Esperet proposed the following question, which remains open.
  \begin{QUE}[Esperet; see \cite{KM2016}]
  Is every $\chi$-bounded class of graphs polynomially $\chi$-bounded?
  \end{QUE}

  Towards answering this question, it is interesting to know some graph operations that preserve the property of polynomial $\chi$-boundedness.
  If we have such graph operations, then we can
  use them to generate polynomially $\chi$-bounded graph classes.

  In this direction, Chudnovsky, Penev, Scott, and Trotignon~\cite{CPST2013} showed that if a hereditary class $\mathcal{C}$ is polynomially $\chi$-bounded, then 
  its closure under disjoint union and substitution is again polynomially $\chi$-bounded. 

  We prove the analog of their result for the $1$-join.
  For graphs $G_1$ and $G_2$ with $|V(G_1)|,|V(G_2)|\ge 3$
  and $V(G_1)\cap V(G_2)=\emptyset$, we say that a graph $G$ is obtained from $G_1$ and $G_2$ by  \emph{$1$-join}
if there are vertices $v_1 \in V(G_1)$ and $v_2\in V(G_2)$ such that 
$G$ is obtained from the disjoint union of $G_1$ and $G_2$ by deleting $v_1$ and $v_2$ and adding all edges between every neighbor of $v_1$ in $G_1$ and every neighbor of $v_2$ in $G_2$.
If so, then we say that $G$ is the \emph{$1$-join} of $(G_1,v_1)$ and $(G_2,v_2)$.
For a class $\mathcal G$, let $\gjoin$ be its closure under disjoint union and $1$-join. Note that if $\mathcal G$ is closed under taking isomorphisms, then so is $\gjoin$.
We will see in Section~\ref{sec:prelim} that $\gjoin$ is hereditary if $\mathcal G$ is hereditary.

\begin{THM}\label{THM:main}
  If $\mathcal{G}$ is a polynomially $\chi$-bounded class of graphs,
  then so is $\gjoin$.
\end{THM}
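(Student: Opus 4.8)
The plan is to prove Theorem~\ref{THM:main} by induction on the number of $1$-join operations used to build a graph $G \in \gjoin$ from members of $\mathcal{G}$. Suppose $\mathcal{G}$ is polynomially $\chi$-bounded with bounding function $f$, which we may assume to be a nondecreasing polynomial. Since disjoint union affects neither $\chi$ nor $\omega$, the content is entirely in the $1$-join. So the core task is: given that $G$ is the $1$-join of $(G_1,v_1)$ and $(G_2,v_2)$ where $G_1,G_2$ already satisfy a polynomial bound, produce a polynomial bound on $\chi(G)$ in terms of $\omega(G)$ that does not degrade as we iterate. The main subtlety is that naive induction lets the polynomial degree or coefficients blow up with the number of joins, so I would aim for a bound of a fixed form, say $\chi(G) \le g(\omega(G))$ for a single polynomial $g$ depending only on $f$, proved to be \emph{invariant} under the $1$-join rather than merely finite at each step.

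First I would set up the structure theory promised in Section~\ref{sec:prelim}: a graph in $\gjoin$ decomposes along a tree of $1$-joins into ``leaf'' pieces each belonging to $\mathcal{G}$, and since $\gjoin$ is hereditary, every induced subgraph inherits such a decomposition. The key local observation about a single $1$-join is how it interacts with cliques and colorings. If $G$ is the $1$-join of $(G_1,v_1)$ and $(G_2,v_2)$, the new edges form a complete bipartite graph between $N_{G_1}(v_1)$ and $N_{G_2}(v_2)$; a clique of $G$ meeting both sides corresponds to a clique of $G_1$ containing (an analogue of) $v_1$ together with a clique of $G_2$ containing $v_2$, joined across. Writing $\omega_1 := \omega(G_1)$ and $\omega_2 := \omega(G_2)$, I would record the crucial inequality $\omega(G) \ge \omega_1' + \omega_2' - 1$ or similar, where $\omega_i'$ is the clique number of the relevant neighborhood side, so that large cliques on the two sides force a large clique in $G$. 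This is what lets the apex vertices $v_1,v_2$ ``pay for themselves'': the presence of $v_i$ in $G_i$ is reflected in $\omega(G)$.

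For the coloring, I would color $G_1 - v_1$ and $G_2 - v_2$ separately, but the difficulty is that a proper coloring of each part need not combine: the new complete-bipartite edges forbid reusing colors across $N_{G_1}(v_1)$ and $N_{G_2}(v_2)$. The standard remedy, following the substitution/clique-cutset style arguments as in Chudnovsky--Penev--Scott--Trotignon~\cite{CPST2013}, is to reserve a palette so that the colors appearing on $N_{G_1}(v_1)$ and those on $N_{G_2}(v_2)$ are disjoint. The number of colors on $N_{G_i}(v_i)$ can be controlled by $\chi(G_i[N_{G_i}(v_i)]) \le f(\omega(G_i[N_{G_i}(v_i)]))$, and since $N_{G_i}(v_i) \cup \{v_i\}$ is a clique-free-of-larger bound tied to $\omega(G)$, these palettes have size polynomial in $\omega(G)$. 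Combining, I would obtain $\chi(G) \le \max(\chi(G_1), \chi(G_2)) + (\text{polynomial overhead in } \omega(G))$, and the whole point is to arrange the overhead so that the recursion solves to a single polynomial in $\omega(G)$.

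The hard part will be controlling the accumulation across an unbounded number of joins: if each join adds an additive polynomial overhead proportional to a neighborhood palette, a long chain of joins could in principle sum to something superpolynomial or even unbounded in terms of $\omega(G)$ alone. The resolution I anticipate is to amortize: one charges each reserved palette to the clique growth it certifies via the $\omega(G) \ge \omega_1' + \omega_2' - 1$ type inequality, so that only $O(\omega(G))$ ``expensive'' joins can occur along any branch and the rest are free. Concretely, I expect to prove by induction a statement of the shape $\chi(G) \le \omega(G)\cdot f(\omega(G))$ (or a similarly shaped fixed polynomial in $f$ and $\omega$), chosen precisely so the inductive step closes without degree increase; verifying that the chosen polynomial is preserved exactly under the $1$-join step, using the clique inequality to absorb the palette overhead, is the technical heart of the argument.
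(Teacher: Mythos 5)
Your proposal correctly isolates the danger (overhead accumulating over unboundedly many joins) and correctly records the single-join clique inequality, but it has a genuine gap at its central step: the palette bound. You write that the number of colors needed on $N_{G_i}(v_i)$ is at most $\chi(G_i[N_{G_i}(v_i)])\le f(\omega(G_i[N_{G_i}(v_i)]))$. This is unjustified, because in your induction $G_i$ lies in $\gjoin$, not in $\mathcal G$, so the bounding function $f$ for $\mathcal G$ does not apply to it. In fact, the neighborhood of a vertex of a graph in $\gjoin$ is a \emph{substitution} composition of neighborhoods of vertices of graphs in $\mathcal G$: when $(G_1,v_1)$ is joined to $(G_2,v_2)$, the neighborhood of a vertex $v$ adjacent to $v_1$ is obtained from $G_1[N_{G_1}(v)]$ by substituting $G_2[N_{G_2}(v_2)]$ for $v_1$. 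This is exactly Lemma~\ref{LEM:substitution} of the paper, and it is why the paper invokes the substitution theorem of Chudnovsky, Penev, Scott, and Trotignon~\cite{CPST2013} (Theorem~\ref{thm:substitution}) to obtain one fixed polynomial $f^*$ bounding $\chi(G[N_G(w)])$ for \emph{all} vertices $w$ of \emph{all} graphs in $\gjoin$. You cite that paper only as stylistic inspiration for the palette-reservation step; without invoking (or reproving) its theorem, the only way to bound the neighborhood chromatic number is by your own inductive hypothesis, which yields precisely the recursion $g(k)=O(f(k)g(k-1))$ of Dvo\v{r}\'{a}k--Kr\'{a}l~\cite{DK2012} and Kim~\cite{R2011}, hence an exponential bound---the very thing the theorem is designed to avoid.

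Moreover, the amortization you anticipate cannot be completed, and your target invariant is of the wrong shape. Reserved palettes at two different joins must be disjoint exactly when the corresponding marker vertices are adjacent inside a common piece, so the number of mutually conflicting palettes is governed by the \emph{chromatic number} of marker patterns, not their clique number; and these patterns nest along the composition tree (neighborhoods substituted into neighborhoods), so the cost compounds multiplicatively rather than additively. Concretely, take $\mathcal G$ to be the graphs isomorphic to induced subgraphs of the wheel $W_5$, with the valid constant bounding polynomial $f\equiv 4$. Joining copies of $W_5$ hub-to-rim repeatedly produces graphs $G\in\gjoin$ in which the neighborhood of a hub is the iterated lexicographic power $C_5[C_5[\cdots]]$; since $\chi(C_5[H])\ge \tfrac52\chi(H)$ while $\omega$ only doubles, such $G$ satisfy $\chi(G)\ge c\,\omega(G)^{\log_2(5/2)}$ with $\log_2(5/2)>1$, so $\chi(G)\le\omega(G)\cdot f(\omega(G))=4\omega(G)$ fails, and no accounting in which all but $O(\omega)$ joins are ``free'' can hold. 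The paper's solution is different in kind: after Lemma~\ref{LEM:substitution} and Theorem~\ref{thm:substitution} supply $f^*$, Lemma~\ref{LEM:main} constructs, by induction over the whole composition tree at once, a proper coloring of product form $\alpha'\times\beta'$ with $(f(\omega)+1)\cdot f^*(\omega-1)$ colors, where the $\beta'$-coordinate colors every vertex neighborhood using the single polynomial $f^*$ and the $\alpha'$-coordinate (with a reserved value $0$ on the relevant neighborhood) handles the pieces. It is this multiplicative combination with the second factor fixed by the substitution theorem---rather than an additive, amortized recursion---that makes the induction close.
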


Dvo\v{r}\'{a}k and Kr\'{a}l~\cite{DK2012} and Kim~\cite{R2011} independently showed that for every hereditary class $\mathcal{G}$ of graphs  that is $\chi$-bounded, 
its closure under taking the $1$-joins is again $\chi$-bounded.
However,  in both papers, 
the $\chi$-bounding function $g$ for the new class  is recursively defined  as $g(n)=O(f(n)g(n-1))$ for a $\chi$-bounding function $f$ for $\mathcal{G}$.
So, $g(n)$ is exponential under their constructions.

We shall see that if $f$ is a polynomial, then $g(n-1)$ in the recurrence relation
can be replaced by some polynomial $f^*$.
This technique allows us to prove Theorem~\ref{THM:main}.

As an application, we investigate the following conjecture of Geelen proposed in 2009.
The definition of vertex-minors will be reviewed in Section~\ref{sec:prelim}.
\begin{CON}[Geelen; see \cite{DK2012}]\label{con:geelen}
  For every graph $H$, the class of graphs with no vertex-minor isomorphic to $H$
  is $\chi$-bounded.
\end{CON}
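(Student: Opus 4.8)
The plan is to fix an arbitrary graph $H$ and reduce the conjecture to the study of \emph{prime} graphs (those admitting no nontrivial split) that contain no vertex-minor isomorphic to $H$, and then to feed this into Theorem~\ref{THM:main}. First I would invoke Cunningham's canonical split decomposition: every connected graph is built from prime graphs, stars, and complete graphs by repeatedly applying the $1$-join. The feature that makes vertex-minors compatible with this decomposition is that each bag of the split decomposition is isomorphic to a vertex-minor of the original graph, a fact I would establish from the behaviour of local complementation across a split (or cite from Section~\ref{sec:prelim}). Consequently, if $G$ has no vertex-minor isomorphic to $H$, then neither does any of its bags. This places the class of $H$-vertex-minor-free graphs inside the closure under disjoint union and $1$-join of three families: prime $H$-vertex-minor-free graphs, stars, and complete graphs. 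Since stars are bipartite and complete graphs satisfy $\chi=\omega$, the latter two families are trivially polynomially $\chi$-bounded.

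Applying Theorem~\ref{THM:main} (closure under $1$-join preserves polynomial $\chi$-boundedness; for the weaker conclusion actually asked in Conjecture~\ref{con:geelen} one could instead invoke Dvo\v{r}\'ak--Kr\'al~\cite{DK2012} and Kim~\cite{R2011}), it would then suffice to prove that the class $\mathcal P_H$ of prime graphs with no vertex-minor isomorphic to $H$ is polynomially $\chi$-bounded. The natural sub-goal is to show that $\mathcal P_H$ has \emph{bounded rank-width}. If this holds we are done: bounded rank-width is functionally equivalent to bounded clique-width, and classes of bounded clique-width are $\chi$-bounded, which already yields the conclusion of Conjecture~\ref{con:geelen}; one would then hope to arrange the resulting bound to be polynomial in $\omega$ so that the reduction through Theorem~\ref{THM:main} preserves the polynomial quality.

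The hard part, and the reason the conjecture remains open in full generality, is precisely this sub-goal. By a theorem of Oum, bounded rank-width is characterised by the exclusion of a \emph{finite} list of vertex-minors; excluding the single graph $H$ forces bounded rank-width only when $H$ is a vertex-minor of every graph of sufficiently large rank-width, and this fails for most $H$. Thus $\mathcal P_H$ generally contains prime graphs of arbitrarily large rank-width, for which the bounded-clique-width argument is simply unavailable. What is really needed is a structural description of $H$-vertex-minor-free graphs of large rank-width, an analogue for vertex-minors of the Robertson--Seymour structure theorem for graph minors, from which one could $\chi$-bound the prime pieces directly. This is exactly the obstacle that the cycle case $H=C_n$ circumvents: there one has the enabling fact that large rank-width forces a long induced cycle as a vertex-minor, so $\mathcal P_{C_n}$ has bounded rank-width and the programme above goes through cleanly. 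For general $H$ no such rank-width bound exists, and supplying the missing structure theorem, or some direct $\chi$-bounding argument for the high-rank-width prime pieces, is the crux that I do not expect to settle by the reduction alone.
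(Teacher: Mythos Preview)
The statement you are trying to prove is Conjecture~\ref{con:geelen}, and the paper does \emph{not} prove it: it is presented as an open problem, with the remark that only special cases (wheels, and in this paper cycles) are known. So there is no ``paper's own proof'' to compare against. Your proposal is likewise not a proof, and to your credit you say so explicitly: you reduce to the prime pieces via split decomposition and Theorem~\ref{THM:main}, and then correctly identify that bounding $\chi$ on the class $\mathcal P_H$ of prime $H$-vertex-minor-free graphs is exactly the missing ingredient that nobody currently knows how to supply for general~$H$.

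Two remarks on the accuracy of your discussion. First, your description of how the paper handles the cycle case is slightly off. The paper does \emph{not} argue that $\mathcal P_{C_n}$ has bounded rank-width; instead it uses Theorem~\ref{THM:pathtocycle} to conclude that every prime graph with no $C_n$ vertex-minor has no long induced path, hence (via Lemma~\ref{lem:ko}) no $P_m$ vertex-minor for suitable $m$, and then applies Corollary~\ref{cor:path} to get polynomial $\chi$-boundedness directly for the prime pieces. Rank-width never enters. Second, your sentence ``bounded rank-width is characterised by the exclusion of a finite list of vertex-minors; excluding the single graph $H$ forces bounded rank-width only when $H$ is a vertex-minor of every graph of sufficiently large rank-width'' is essentially correct as a diagnosis, but note that even if $\mathcal P_H$ \emph{did} have bounded rank-width, the known $\chi$-bounding functions for bounded rank-width (or clique-width) are not polynomial, so your hoped-for polynomial conclusion would still need extra work. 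In short: your reduction is sound, your identification of the obstacle is right, but neither you nor the paper proves the conjecture.
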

Conjecture~\ref{con:geelen} is known to be true when $H$ is a wheel graph, shown by Choi, Kwon, Oum, and Wollan~\cite{CKOW2017}.
Motivated by the question of Esperet, we may ask the following.
\begin{QUE}\label{q:polygeelen}
  Is it true that for every graph $H$, the class of graphs with no vertex-minor isomorphic to $H$
  is polynomially $\chi$-bounded?
\end{QUE}
If this holds for $H$, then the class of $H$-vertex-minor free graphs satisfies the \emph{Erd\H{o}s-Hajnal property}, which means that there is
a constant $c>0$ such that every graph $G$ in this class has an independent set or a clique of size at least $\abs{V(G)}^c$. Recently, Chudnovsky and Oum~\cite{CO2018} proved that
the Erd\H{o}s-Hajnal property holds for the class of $H$-vertex-minor free graphs for all $H$.

We write $P_n$ to denote the path graph on $n$ vertices
and $C_n$ to denote the cycle graph on $n$ vertices.
Let $K_n\tri\S_n$ be the graph
on $2n$ vertices $\{a_1,a_2,\ldots,a_n,b_1,b_2,\ldots,b_n\}$
such that
$\{a_1,a_2,\ldots,a_n\}$ is a clique,
$\{b_1,b_2,\ldots,b_n\}$ is a stable set,
and 
for all $1\le i,j\le n$,
$a_i$ is adjacent to $b_j$
if and only if $i\ge j$.
See Figure~\ref{fig:ks} for an illustration of $K_6\tri\S_6$.
\begin{figure}
  \centering
  \tikzstyle{v}=[circle, draw, solid, fill=black, inner sep=0pt, minimum width=3pt]
  \begin{tikzpicture}[xscale=1.5]
    \foreach \i in {1,...,6}{
      \node [v,label=right:$a_{\i}$](v\i) at (\i,0) {};
      \node [v,label=$b_{\i}$] (w\i) at (\i,1) {};
    }
    \foreach \i in {1,...,6}{
      \foreach \j in {1,...,\i} {
        \draw (v\i)--(w\j);
        }
    }
    \foreach \i in {2,...,6}{
      \foreach \j in {1,...,\i} {
        \draw  (v\i) to [bend left]  (v\j);
      }
      }
  \end{tikzpicture}
  \caption{$K_6\tri \S_6$.}
  \label{fig:ks}
\end{figure}

In Section~\ref{sec:path}, we prove the following theorem.
\begin{THM}\label{thm:pathhalf}
  Let $n\ge 4$.
  If a graph $G$ has no induced subgraph isomorphic to 
  $P_n$ or $K_{\lceil n/2\rceil}\tri \S_{\lceil n/2\rceil }$,
  then 
 \[\chi(G)\le (n-3)^{\lceil n/2\rceil-1} \omega(G)^{\lceil n/2\rceil-1}.\]
\end{THM}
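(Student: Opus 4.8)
The plan is to reduce to connected graphs and then run an induction on the \emph{half-graph number} $\mu(G)$, defined as the largest $p$ for which $G$ has an induced subgraph isomorphic to $K_p\tri\overline{K_p}$ (with $\mu(G)=0$ meaning $G$ is edgeless, since $K_1\tri\overline{K_1}$ is a single edge). Because $K_p\tri\overline{K_p}$ is an induced subgraph of $K_{p+1}\tri\overline{K_{p+1}}$, the hypothesis that $G$ has no induced $K_{\lceil n/2\rceil}\tri\overline{K_{\lceil n/2\rceil}}$ is exactly the inequality $\mu(G)\le\lceil n/2\rceil-1$. I would therefore prove the sharper, cleaner statement that every $P_n$-free graph $G$ satisfies $\chi(G)\le\bigl((n-3)\,\omega(G)\bigr)^{\mu(G)}$; specializing to $\mu(G)\le\lceil n/2\rceil-1$ and using $(n-3)\,\omega(G)\ge1$ then yields the theorem. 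The base case $\mu(G)=0$ is immediate, since then $G$ has no edge and $\chi(G)=1$.

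For the inductive step I would keep $G$ connected and follow a Gy\'arf\'as-style layered coloring: fix a root $r$, let $L_i$ be the set of vertices at distance $i$ from $r$, and use that a shortest path is induced and that $G$ is $P_n$-free to bound the number of nonempty layers. A careful count of how many layers must be charged separately (folding the root layer into a nonadjacent layer and reusing palettes across non-consecutive layers) should yield the multiplicative factor $n-3$ rather than the naive $n-1$. Each pass of this coloring steps from the current graph into the neighborhood $N(v)$ of a chosen vertex $v$; the resulting subgraph is completely joined to the growing clique, so iterating produces a nested \emph{cone} sequence $G=H_0\supseteq H_1\supseteq\dots$ with $H_{i}=H_{i-1}[N_{H_{i-1}}(v_i)]$ and $\{v_1,\dots,v_i\}$ a clique joined to $V(H_i)$.

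The heart of the argument is the bound on the depth of this nested recursion: I claim a cone sequence of length $d$ forces an induced $K_{d}\tri\overline{K_{d}}$ or an induced $P_n$ in $G$, so that $P_n$-freeness caps the depth at $\mu(G)\le\lceil n/2\rceil-1$. The natural witnesses are the \emph{dropped} vertices: a vertex $b$ in $V(H_{i-1})\setminus N[v_i]$ is adjacent to all of $v_1,\dots,v_{i-1}$ and nonadjacent to $v_i$, which is precisely one half of the half-graph adjacency pattern for the clique $v_1,\dots,v_d$. Recording one such witness per step and combining with the inductive bound for the smaller cones (which have half-graph number at most $\mu(G)-1$, contributing the factor $\bigl((n-3)\,\omega(G)\bigr)^{\mu(G)-1}$, and with the factor $\omega(G)$ absorbing the clique of cone vertices) would close the induction.

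The main obstacle is exactly this depth bound, and it is where both forbidden subgraphs and the threshold $\lceil n/2\rceil$ genuinely enter. Promoting the ``half-determined'' pattern of the dropped witnesses to a full induced $K_{d}\tri\overline{K_{d}}$ requires controlling the adjacencies that the nested construction leaves undetermined: whether the witnesses are pairwise nonadjacent and whether each witness attaches to the \emph{later} cone vertices in the prescribed way. The resolution I expect is a dichotomy driven by $P_n$-freeness: if these adjacencies all conform, one reads off the half-graph, and otherwise a single nonconforming pair lets one reroute the clique--witness alternation into a long induced path. Since a realized $K_p\tri\overline{K_p}$ contains no long induced path, the long path can only arise from such a failure, and budgeting roughly two path-vertices per cone step is what forces the balance $2\lceil n/2\rceil\ge n$ and pins the threshold at $\lceil n/2\rceil$. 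Making this rerouting precise, and verifying that it never simultaneously destroys the half-graph and stays short, is the step I anticipate will require the most care.
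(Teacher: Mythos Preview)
Your proposal has two genuine gaps, and both are exactly where the paper's argument does real work.

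First, the ``Gy\'arf\'as-style layered coloring'' you describe is BFS layering, not Gy\'arf\'as's path-growing argument, and it does not produce the factor $(n-3)\,\omega(G)$. BFS from a root in a connected $P_n$-free graph does give at most $n-1$ layers, but the best this yields is $\chi(G)\le 2\max_i\chi(G[L_i])$ (reusing palettes on layers of the same parity), and there is no mechanism to bound $\chi(G[L_i])$: a layer is not a single neighborhood, so it does not feed your cone recursion. The paper instead runs a genuine path-growing step at each depth: starting from an edge with a high-$\chi$ attached component (costing a factor $k=\omega(G)$, Lemma~\ref{lem:start}), it extends the path one edge at a time, each extension paying at most $d$ in chromatic number (Lemma~\ref{lem:extend}); $P_n$-freeness halts this after at most $n-3$ extensions, forcing some endpoint $w$ whose neighborhood in the surviving component has $\chi>d$ (Lemma~\ref{lem:dompath}). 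This is where $(n-3)k$ comes from.

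Second, and more seriously, your choice of witnesses does not control the adjacencies you need, and the ``dichotomy'' you hope will rescue this is neither proved nor, as stated, correct. A dropped vertex $b_i\in V(H_{i-1})\setminus N[v_i]$ is indeed complete to $\{v_1,\dots,v_{i-1}\}$ and nonadjacent to $v_i$, but nothing prevents $b_i$ from being adjacent to several later $v_j$'s and to several other $b_j$'s simultaneously; a single nonconforming pair does not in general reroute to an induced $P_n$, so your budget of ``two path-vertices per cone step'' cannot be cashed in. The paper sidesteps this entirely by choosing the witnesses differently. After the path-growing step produces a $d$-good path $Q_i$ ending at $w_i$ with predecessor $u_i$, the next level $G_i$ is taken inside a component of $\Omega(G_{i-1},Q_i)$, which by definition is anticomplete to every vertex of $Q_i$ except $w_i$. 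Hence $u_i$ is automatically nonadjacent to \emph{everything} at deeper levels---to all later $w_j$ and all later $u_j$---while $u_i\in G_{i-1}\subseteq\bigcap_{j<i}N(w_j)$ forces $u_i$ adjacent to all earlier $w_j$, and $u_iw_i\in E(Q_i)$. So the half-graph on $\{w_1,\dots,w_{m-1},x\}\cup\{u_1,\dots,u_{m-1},y\}$ (with $xy$ an edge of $G_{m-1}$) drops out with no dichotomy needed. The moral is that the stable-set witness at depth $i$ should be a vertex \emph{adjacent} to the cone vertex $w_i$ but chosen so that the next cone lives in a component avoiding its neighborhood; your ``dropped'' vertices run in the opposite direction and leave precisely the hard adjacencies undetermined.
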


We use this graph $K_{\lceil n/2\rceil}\tri\S_{\lceil n/2\rceil}$ because
it has a vertex-minor isomorphic to $P_{n}$, shown by Kwon and Oum~\cite[Lemma 2.8]{KO2014}.
\begin{LEM}[Kwon and Oum~{\cite[Lemma 2.8]{KO2014}}]\label{lem:ko}
  The graph $K_{\lceil n/2\rceil}\tri \S_{\lceil n/2\rceil}$ has a vertex-minor isomorphic to $P_{n}$.
\end{LEM}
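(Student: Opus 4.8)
The plan is to exhibit an explicit short sequence of local complementations, pivots, and (in the odd case) a single vertex deletion that carries $\K_m\tri\S_m$ to $P_n$, where $m=\lceil n/2\rceil$. Since a pivot on an edge is a composition of three local complementations, every operation in such a sequence keeps us inside the vertex-minor order, so it suffices to locate $P_n$ at the end. Because $2m\ge n$, I would first reduce to producing $P_{2m}$ as a vertex-minor: when $n=2m$ this already is $P_n$, and when $n=2m-1$ one further deletes an endpoint of $P_{2m}$.

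First I would destroy the clique. In $\K_m\tri\S_m$ the vertex $b_1$ is adjacent to exactly $a_1,\dots,a_m$, which form a clique, so local complementation at $b_1$ turns that clique into a stable set and leaves every other adjacency untouched (no $a_ib_j$ with $j\ge 2$ lies inside the neighbourhood of $b_1$). The result is the \emph{half-graph} $H_m$: both $\{a_i\}$ and $\{b_j\}$ are stable and $a_i$ is adjacent to $b_j$ if and only if $i\ge j$. It remains to show that $H_m$ has $P_{2m}$ as a vertex-minor.

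For this I would run an induction that peels off the path two vertices at a time, driven by the pivots $\pivot a_2b_2,\pivot a_3b_3,\dots,\pivot a_{m-1}b_{m-1}$ applied in this order. The invariant I would maintain is that after pivoting on $a_2b_2,\dots,a_kb_k$ the graph is $G^{(k)}$, consisting of a path $a_1b_1b_2a_2b_3a_3\cdots b_ka_k$ whose last vertex is additionally joined to each of $a_{k+1},\dots,a_m$, together with the half-graph on $\{a_{k+1},\dots,a_m,b_{k+1},\dots,b_m\}$ and no further edges; for $k=1$ this is exactly $H_m$, with $b_1$ in the role of the last path vertex. The key computation is that pivoting $G^{(k)}$ on $a_{k+1}b_{k+1}$ produces $G^{(k+1)}$. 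Here $a_{k+1}$ has neighbourhood $\{\,\text{last path vertex},\,b_{k+1}\,\}$ and $b_{k+1}$ has neighbourhood $\{a_{k+1},\dots,a_m\}$, so the exclusive parts in the pivot are $\{\text{last path vertex}\}$ and $\{a_{k+2},\dots,a_m\}$ with empty common part; the cross-toggle therefore deletes precisely the join edges to $a_{k+2},\dots,a_m$ while preserving the single edge to $a_{k+1}$, and the relabelling of $a_{k+1},b_{k+1}$ lengthens the path by the two vertices $b_{k+1},a_{k+1}$ and hands the join role to $a_{k+1}$. At $k=m-1$ the residual half-graph is the single edge $a_mb_m$, so $G^{(m-1)}=P_{2m}$, as required.

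The main obstacle is the bookkeeping in this key computation: one must verify that at each pivot the three-part neighbourhood partition is exactly as claimed, that the cross-toggle hits precisely the join edges and nothing inside the already-built path or inside the residual half-graph, and that the swap of $a_{k+1}$ and $b_{k+1}$ reattaches the two new vertices in the correct order. Everything else is routine: the reduction to $H_m$, the base cases $m\in\{1,2\}$ (where $H_1=P_2$ and $H_2=P_4$ already, with no pivots), and the deletion of one endpoint to pass from $P_{2m}$ to $P_{2m-1}$ when $n$ is odd.
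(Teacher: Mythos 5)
Your proof is correct; I checked both the local complementation at $b_1$ and the pivot invariant, and they propagate exactly as you claim. There is, however, no internal proof to compare it against: the paper does not prove Lemma~\ref{lem:ko} at all, but imports it from Kwon and Oum~\cite[Lemma 2.8]{KO2014}, and even tells the reader the vertex-minor machinery can be skipped if that lemma is taken as a black box. The closest relative inside the paper is the lemma in Section~\ref{sec:p5} stating that $K_n\tri\S_n$ has a \emph{pivot}-minor isomorphic to $P_{n+1}$, proved by the very pivot sequence you use, $\pivot a_2b_2\pivot a_3b_3\pivot\cdots\pivot a_{n-1}b_{n-1}$, but applied directly to $K_n\tri\S_n$: there the clique is left intact, the resulting induced path is $a_1a_2\cdots a_nb_n$, and the vertices $b_1,\ldots,b_{n-1}$ must be deleted, so one only obtains a path on $n+1$ of the $2n$ vertices. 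Your one extra move --- locally complementing at $b_1$ first, which destroys the clique and turns $K_m\tri\S_m$ into the half-graph --- is precisely what makes the same pivots sweep \emph{all} $2m$ vertices into an induced path, which is what the statement with $m=\lceil n/2\rceil$ requires (plus one endpoint deletion when $n$ is odd). So your argument is a correct, self-contained substitute for the external citation, and it in fact establishes the slightly stronger fact that $K_m\tri\S_m$ is locally equivalent to $P_{2m}$, with no vertex deletions needed in the even case.
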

This allows us to obtain the following corollary of Theorem~\ref{thm:pathhalf}.
\begin{COR}\label{cor:path}
  The class of graphs with no vertex-minor isomorphic to $P_n$
  is polynomially $\chi$-bounded.
\end{COR}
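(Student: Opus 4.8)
The plan is to deduce Corollary~\ref{cor:path} directly from Theorem~\ref{thm:pathhalf} and Lemma~\ref{lem:ko}, relying on only two elementary properties of vertex-minors: every induced subgraph of a graph is itself a vertex-minor of that graph, and the vertex-minor relation is transitive. Let $\mathcal G$ denote the class of graphs with no vertex-minor isomorphic to $P_n$, and note that $\mathcal G$ is hereditary, since deleting vertices cannot introduce a vertex-minor that was not already present. I will assume $n\ge 4$, the cases $n\le 3$ being handled separately and trivially (for $n\le 2$ the class consists of edgeless graphs, and for $n=3$ the forbidden vertex-minor forces the members of the class to have very restricted structure, so polynomial $\chi$-boundedness is immediate).

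First I would argue that every $G\in\mathcal G$ has no induced subgraph isomorphic to $P_n$ and none isomorphic to $K_{\lceil n/2\rceil}\tri\S_{\lceil n/2\rceil}$. An induced $P_n$ would itself be a vertex-minor isomorphic to $P_n$, which is forbidden. For the second graph, suppose toward a contradiction that $G$ had an induced subgraph $H$ isomorphic to $K_{\lceil n/2\rceil}\tri\S_{\lceil n/2\rceil}$. By Lemma~\ref{lem:ko}, $H$ has a vertex-minor isomorphic to $P_n$; since $H$ is a vertex-minor of $G$ and the vertex-minor relation is transitive, $G$ would then have a vertex-minor isomorphic to $P_n$, contradicting $G\in\mathcal G$.

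With both induced subgraphs excluded, Theorem~\ref{thm:pathhalf} applies to every $G\in\mathcal G$ and gives
\[
  \chi(G)\le (n-3)^{\lceil n/2\rceil-1}\,\omega(G)^{\lceil n/2\rceil-1}.
\]
Because $\mathcal G$ is hereditary, the same bound holds for every induced subgraph of every member of $\mathcal G$, so the polynomial $f(x)=(n-3)^{\lceil n/2\rceil-1}x^{\lceil n/2\rceil-1}$ is a $\chi$-bounding function for $\mathcal G$, which is what the corollary asserts.

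As for difficulties, this corollary is essentially immediate once Theorem~\ref{thm:pathhalf} and Lemma~\ref{lem:ko} are in hand; the only point demanding care is the containment step, where one must combine transitivity of the vertex-minor relation with the fact that an induced subgraph is a vertex-minor, so that a $P_n$ vertex-minor of the forbidden graph $K_{\lceil n/2\rceil}\tri\S_{\lceil n/2\rceil}$ propagates upward to $G$. The genuinely hard work lies upstream, in establishing the explicit bound of Theorem~\ref{thm:pathhalf} and proving Lemma~\ref{lem:ko}; in particular, neither Theorem~\ref{THM:main} nor any of the $1$-join machinery is needed for this corollary.
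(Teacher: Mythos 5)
Your proposal is correct and is exactly the argument the paper intends: the paper leaves the proof implicit (stating only that Lemma~\ref{lem:ko} ``allows us to obtain'' the corollary from Theorem~\ref{thm:pathhalf}), and your writeup—excluding induced $P_n$ and induced $K_{\lceil n/2\rceil}\tri\S_{\lceil n/2\rceil}$ via transitivity of the vertex-minor relation, then applying Theorem~\ref{thm:pathhalf} together with hereditariness of the class—is precisely that intended deduction. Your explicit handling of the trivial cases $n\le 3$ is a small but harmless addition beyond what the paper records.
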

Kwon and Oum~\cite{KO2014} proved the following theorem,
stating that a prime graph with a long induced path must contain a long induced cycle as a vertex-minor.
A graph is \emph{prime} if it is not the $1$-join
of $(G_1,v_1)$ and $(G_2,v_2)$ for some graphs $G_1$, $G_2$ with $\abs{V(G_1)}, \abs{V(G_2)}\ge 3$.

\begin{THM}[Kwon and Oum~\cite{KO2014}]\label{THM:pathtocycle}
  If a prime graph has an induced path of length $\lceil 6.75n^7\rceil$,
  then it has a cycle of length $n$ as a vertex-minor.
\end{THM}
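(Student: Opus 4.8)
The plan is to reduce the claim to producing a single long induced cycle inside some vertex-minor of $G$, and then to use primeness to build that cycle out of the given long induced path. Everything rests on two elementary observations. First, for every $m\ge 4$ the cycle $C_m$ has $C_{m-1}$ as a vertex-minor: local complementation at a vertex $u$ of $C_m$ makes the two neighbours of $u$ adjacent, and deleting $u$ then leaves an induced $C_{m-1}$; iterating, any induced cycle of length at least $n$ appearing in a vertex-minor of $G$ yields $C_n$. Second, if $v_iv_{i+1}\cdots v_j$ is an induced subpath and $w$ is a vertex whose only neighbours among $\{v_i,\dots,v_j\}$ are the endpoints $v_i$ and $v_j$, then $\{v_i,\dots,v_j,w\}$ induces a cycle of length $j-i+2$. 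So it suffices to find, after some local complementations, either a long induced cycle outright or an induced subpath of length at least $n-2$ with an external vertex seeing exactly its two ends.

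I would start from a longest induced path $P=v_0v_1\cdots v_k$ with $k=\lceil 6.75n^7\rceil$. Maximality already gives structure: no vertex outside $P$ can have $v_0$ (or $v_k$) as its only neighbour in $P$, since otherwise $P$ extends. For each $w\notin V(P)$ I would list $N(w)\cap V(P)$ in the order they occur along $P$. If some $w$ has two \emph{consecutive} path-neighbours $v_a,v_b$ (no path-neighbour of $w$ strictly between them) with $b-a\ge n-2$, then $w$ together with $v_a,\dots,v_b$ induces a cycle of length at least $n$ by the second observation, and we are done. The substantive case is therefore the \emph{locally dense} one, in which every external vertex has all gaps between consecutive path-neighbours smaller than $n-2$; here no single external vertex can span a long cycle, and primeness must be used. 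That primeness is genuinely needed is clear from the fact that a path is distance-hereditary and hence has no $C_5$ (let alone $C_n$) as a vertex-minor at all.

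To use primeness I would cut a long induced subpath in its middle and examine the partition of $V(G)$ into the two halves together with the rest of the graph. Within the path the only edge crossing the cut is the middle one, whose contribution to the cut-rank is $1$; since $G$ is prime it has no split, so the cut-rank is at least $2$, which forces external vertices to supply genuine cross-connections between the two halves that are not aligned with that single edge. Applying this across many scales along the $\approx n^7$ vertices of $P$, the goal is to extract external vertices whose cross-connections can be chained together into an induced path through the external part that joins the two far ends of one long subpath while avoiding its interior, closing that subpath into an induced cycle of length at least $n$. The main manipulation tool is \emph{sliding}: if $w$ meets $P$ only at an endpoint $v_0$, local complementation at $v_0$ followed by deleting $v_0$ relocates the attachment of $w$ to $v_1$, and since $P$ is enormously long we can afford many such moves to normalise attachments to single clean points before chaining them.

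The main obstacle is exactly this locally dense case, and within it the need to control the \emph{global} effect of the local complementations: each operation used to clean one external vertex's attachment can alter the induced path and the attachments of all the others, so the operations must be scheduled (and confined to a sufficiently sparse, well-separated set of positions along $P$) so that a long induced subpath survives throughout. Converting the abstract guarantee from primeness --- that some rank-$1$ obstruction must fail --- into an explicit chain of external vertices realising a long induced cycle, and then tracking the cumulative loss across the several pigeonhole and Ramsey steps precisely enough to land at $\lceil 6.75n^7\rceil$ (the $6.75=27/4$ and the seventh power presumably recording a few nested reductions, each costing a constant and some powers of $n$), is where the real difficulty lies. The two reductions in the first paragraph are what keep the target concrete: at every stage we are only ever trying to exhibit one long induced cycle somewhere in a vertex-minor.
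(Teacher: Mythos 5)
First, a point of orientation: the paper you are reading does not prove Theorem~\ref{THM:pathtocycle} at all. It is imported as a black box from Kwon and Oum~\cite{KO2014} (the paper even remarks that readers may ``assume Lemma~\ref{lem:ko} and Theorem~\ref{THM:pathtocycle} from other papers''), and it is used only inside the proof of Theorem~\ref{THM:cycle}. So your attempt has to be measured against the original proof in \cite{KO2014}. Your two opening reductions are correct and are indeed the right way to keep the target concrete: $C_m$ has $C_{m-1}$ as a vertex-minor for $m\ge 4$, so it suffices to produce an induced cycle of length at least $n$ in some graph locally equivalent to $G$; and a vertex whose neighbourhood on a long induced subpath is exactly the two ends of that subpath closes it into such a cycle. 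The split into the case where some external vertex has a gap of length at least $n-2$ between consecutive attachments, versus the ``locally dense'' case, is also a sensible start.

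The problem is that everything after that is a plan rather than a proof, and the part you defer is the entire theorem. In the locally dense case you propose to ``extract external vertices whose cross-connections can be chained together'' and to schedule local complementations so that a long induced subpath survives, and you state explicitly that converting the primeness guarantee into such a chain ``is where the real difficulty lies.'' That conversion is the whole content of the statement; the easy observations preceding it carry none of the weight. Moreover, primeness is invoked in a form that is not directly usable: primeness of $G$ bounds below the cut-rank of partitions of $V(G)$, but what the argument needs is control over splits of an \emph{induced subgraph} (the path), and an induced subgraph of a prime graph can be arbitrarily far from prime. The standard device that bridges exactly this gap --- and the one Kwon and Oum actually use --- is Geelen's notion of a \emph{blocking sequence}: for every split $(A,B)$ of an induced subgraph $H$ of a prime graph $G$ there is a sequence of vertices outside $H$ that blocks it, together with a shortening lemma showing that suitable local complementations reduce the blocking sequence to a single vertex while keeping the relevant induced structure intact. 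Iterating this shortening along the path, with the bookkeeping that produces the exponent $7$ and the constant $6.75$, is the actual proof in \cite{KO2014}. None of this mechanism appears, even by name, in your proposal, so what you have is a genuine gap: a correct pair of preliminary reductions, a roughly correctly aimed strategy, and a missing core.
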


We deduce the following stronger theorem from Corollary~\ref{cor:path}
by using Theorems~\ref{THM:main} and \ref{THM:pathtocycle}.
This answers Question~\ref{q:polygeelen} for a long cycle.

\begin{THM}\label{THM:cycle}
  The class of graphs with no vertex-minor isomorphic to $C_n$
  is polynomially $\chi$-bounded.
\end{THM}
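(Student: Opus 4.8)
The plan is to realize the class of $C_n$-vertex-minor-free graphs as a subclass of a class that is obtained from a polynomially $\chi$-bounded class by the $1$-join, so that Theorem~\ref{THM:main} applies. Fix $n\ge 3$ and set $N=\lceil 6.75n^{7}\rceil+1$. Let $\mathcal P$ be the class of graphs with no vertex-minor isomorphic to $P_N$; by Corollary~\ref{cor:path} it is polynomially $\chi$-bounded, so by Theorem~\ref{THM:main} its closure $\mathcal P^{\&}$ under disjoint union and $1$-join is polynomially $\chi$-bounded as well. Since every subclass of a polynomially $\chi$-bounded class is again polynomially $\chi$-bounded, it suffices to show that every graph $G$ with no vertex-minor isomorphic to $C_n$ lies in $\mathcal P^{\&}$. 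For this I would use the split decomposition: $G$ is obtained from its prime factors by repeatedly taking $1$-joins, and (as recalled in Section~\ref{sec:prelim}) each prime factor is isomorphic to a vertex-minor of $G$. Hence each prime factor again has no vertex-minor isomorphic to $C_n$, and since $\mathcal P^{\&}$ is closed under $1$-join it is enough to place each prime factor in $\mathcal P$; that is, to show that a prime graph with no $C_n$-vertex-minor has no $P_N$-vertex-minor.

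This last statement is the heart of the argument, and I would prove its contrapositive: every prime graph $H$ with a $P_N$-vertex-minor has a $C_n$-vertex-minor. The key tools are two standard facts about vertex-minors (reviewed in Section~\ref{sec:prelim}). First, $H'$ is a vertex-minor of $H$ exactly when $H'$ is an induced subgraph of some graph locally equivalent to $H$; applied to $P_N$ this yields a graph $H^{\ast}$ locally equivalent to $H$ that contains an induced $P_N$, that is, an induced path of length $\lceil 6.75n^{7}\rceil$. Second, local complementation preserves the cut-rank function and hence preserves primality, so $H^{\ast}$ is again prime. Now $H^{\ast}$ is a prime graph with an induced path of length $\lceil 6.75n^{7}\rceil$, so Theorem~\ref{THM:pathtocycle} produces a vertex-minor of $H^{\ast}$ isomorphic to $C_n$; as $H^{\ast}$ is locally equivalent to $H$, the graph $H$ itself has $C_n$ as a vertex-minor. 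This is exactly what is needed, and it also shows the argument is not circular: for instance $K_{\lceil N/2\rceil}\tri\S_{\lceil N/2\rceil}$ is prime and, by Lemma~\ref{lem:ko}, carries a long path vertex-minor even though its induced paths are short, and the argument above forces it to contain $C_n$ as a vertex-minor nonetheless.

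I expect the main obstacle to be precisely this crux, and within it the passage from ``$P_N$-vertex-minor'' to ``induced path in a \emph{prime} graph'' on which Theorem~\ref{THM:pathtocycle} can be invoked. The naive attempt --- using only that $H$ has no $C_n$-vertex-minor to deduce that $H$ has no long induced path --- controls induced paths but not the path vertex-minor forbidden in the definition of $\mathcal P$, a strictly stronger condition; the two facts above (vertex-minors as induced subgraphs of locally equivalent graphs, and invariance of primality under local complementation via the cut-rank function) are exactly what bridge this gap. Once the crux is established, the rest is routine bookkeeping: checking the index $N$ so that Theorem~\ref{THM:pathtocycle} applies to a path of length $\lceil 6.75n^{7}\rceil$, verifying that the prime factors of a $C_n$-vertex-minor-free graph are themselves $C_n$-vertex-minor-free, and invoking Theorem~\ref{THM:main} together with the inheritance of polynomial $\chi$-boundedness by subclasses.
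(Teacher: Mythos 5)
Your proposal is correct and follows essentially the same route as the paper: apply Corollary~\ref{cor:path} and Theorem~\ref{THM:main} to the class of graphs with no long path as a vertex-minor, decompose a connected $C_n$-vertex-minor-free graph into prime pieces via the split decomposition (each piece isomorphic to an induced subgraph, hence still $C_n$-vertex-minor-free), and place each prime piece in that class using Theorem~\ref{THM:pathtocycle}. You are in fact more explicit than the paper, whose proof compresses into the phrase ``by Theorem~\ref{THM:pathtocycle}'' exactly the bridge you spell out --- passing from a path vertex-minor to an induced path in a locally equivalent graph, noting that local complementation preserves primality --- as well as the off-by-one bookkeeping between ``path of length $\lceil 6.75n^7\rceil$'' and ``$P_m$-vertex-minor'' that your choice $N=\lceil 6.75n^7\rceil+1$ handles cleanly.
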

\begin{proof}
  Let $\mathcal G$ be the class of graphs having no
  vertex-minor isomorphic to $P_m$ for $m=\lceil 6.75n^7\rceil$.
  By Corollary~\ref{cor:path},
  $\mathcal G$ is polynomially $\chi$-bounded.
  By Theorem~\ref{THM:main}, $\gjoin$ is polynomially $\chi$-bounded.

  Let $\mathcal H$ be the class of graphs having no vertex-minor
  isomorphic to $C_n$.
  Let $G\in \mathcal H$.
  We claim that $G\in \gjoin$.
  We may assume that $G$ is connected. %
  Every connected prime induced subgraph of $G$ is in $\mathcal G$
  by Theorem~\ref{THM:pathtocycle}.
  Then $G$ can be obtained from
  connected prime induced subgraphs of $G$ %
  by taking $1$-join repeatedly.
  Thus, $G\in \gjoin$. This proves that $\mathcal H\subseteq \gjoin$.
\end{proof}
Because  $C_m$ contains $C_n$ as a vertex-minor whenever $m\ge n$,
we may ask a stronger question on whether or not the class of graphs with no induced subgraph isomorphic to $C_m$ for some $m\ge n$ is polynomially $\chi$-bounded. It is not known.
The following theorem
of  Chudnovsky, Scott, and Seymour~\cite{CSS2015}
was initially a conjecture of Gy\'arf\'as~\cite{Gyarfas1987} in 1985.
\begin{THM}[Chudnovsky, Scott, and Seymour~\cite{CSS2015}]
  The class of graphs with no induced subgraph isomorphic to
  a graph in $\{C_m:m\ge n\}$
  is $\chi$-bounded.
\end{THM}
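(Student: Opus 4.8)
The plan is to prove the contrapositive by induction on the clique number, so I fix $n$ and aim to show that a graph $G$ with bounded $\omega(G)$ but sufficiently large $\chi(G)$ must contain an induced cycle of length at least $n$ (a \emph{long hole}). The base case is the triangle-free case $\omega(G)\le 2$, treated separately, and for the inductive step I assume a chromatic bound is already known for all long-hole-free graphs with clique number less than $\omega(G)$. The workhorse throughout is the Gyárfás levelling technique~\cite{Gyarfas1987}: choosing a vertex $v_0$ inside a component of large chromatic number and partitioning the vertices into breadth-first levels $L_0=\{v_0\},L_1,L_2,\dots$ by distance from $v_0$. Since edges run only within a level or between consecutive levels, colouring $L_i$ with one palette when $i$ is even and a disjoint palette when $i$ is odd shows $\chi(G)\le 2\max_i\chi(G[L_i])$; hence largeness of $\chi(G)$ forces some single level $L_i$ to carry large chromatic number.

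Next I would localise and iterate inside such a high-$\chi$ level to extract a long sequence of \emph{bags} $B_1,B_2,\dots,B_t$, each inducing a subgraph of large chromatic number, arranged so that consecutive bags are joined by an edge while bags that are far apart in the sequence are anticomplete. The induction on $\omega$ enters here: the neighbourhood of any vertex in a $K_{\omega+1}$-free graph is $K_\omega$-free, so restricting to neighbourhoods lowers the available clique number, and this lets me peel off the bags one at a time by reapplying the levelling argument to the residual high-$\chi$ pieces while keeping enough chromatic number alive. The outcome of this stage is the bag sequence together with a skeleton path $p_1p_2\cdots p_t$ with $p_i\in B_i$.

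The final step is to realise this skeleton as a long induced cycle: choose the representatives $p_i$ carefully and close the path by a shortest arc back through the breadth-first tree toward $v_0$. The hard part, and the crux of the entire argument, is controlling \emph{inducedness} — guaranteeing that the resulting cycle has no chords, which amounts to forbidding edges between non-consecutive bags and between the closing arc and the interior of the skeleton. Here the hypothesis feeds back on itself, since any undesired chord would either cut the cycle into a still-long hole or permit a reroute, so a minimal configuration cannot admit such chords. Making this anticompleteness quantitative — prescribing how large $\chi(G)$ must be so that, after all the deletions forced by potential chords, enough chromatic number survives to complete a hole of length at least $n$ — is exactly where the technical weight of Chudnovsky, Scott, and Seymour~\cite{CSS2015} concentrates, and it is also the reason their bounding function is far from polynomial, in contrast with the results obtained elsewhere in this paper.
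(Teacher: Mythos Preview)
The paper does not prove this theorem: it is stated as a cited result of Chudnovsky, Scott, and Seymour~\cite{CSS2015}, included only to contextualise Theorem~\ref{THM:cycle}, and no proof or sketch is offered. There is therefore nothing in the paper to compare your proposal against.

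For what it is worth, your sketch is in the right spirit for the original argument in~\cite{CSS2015}: the Gy\'arf\'as levelling, the reduction to a single BFS layer, and the use of induction on $\omega$ via neighbourhoods are all present there. But the sketch is too vague at the critical point---the construction of the ``bag sequence'' with the anticompleteness between non-consecutive bags is precisely the hard step, and saying that ``any undesired chord would either cut the cycle into a still-long hole or permit a reroute'' does not by itself give a proof; the actual argument in~\cite{CSS2015} requires a delicate iterated construction of so-called \emph{multicovers} and a careful accounting that is considerably more involved than what you describe. As a high-level roadmap your proposal is reasonable, but it is not a proof and would not stand on its own. Since the present paper does not attempt a proof either, no correction is needed here beyond noting that this statement is quoted, not proved.
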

We remark that as far as we know, it is not known whether the class of graphs with no $P_5$ induced subgraph is polynomially $\chi$-bounded.

This paper is organized as follows.
We will review necessary definitions in Section~\ref{sec:prelim}.
In Section~\ref{sec:proof}, 
we will present a proof of Theorem~\ref{THM:main}.
In Section~\ref{sec:path}, we will prove Theorem~\ref{thm:pathhalf}.
In Section~\ref{sec:p5},
we discuss related problems.

\section{Preliminaries}\label{sec:prelim}

All graphs in this paper are simple and undirected.
For a graph $G$, let $V(G)$ and $E(G)$ denote the vertex set and the edge set of $G$, respectively. 
A \emph{clique} of
a graph is
a set of pairwise adjacent vertices.
For a graph $G$, let $\omega(G)$ be the maximum
number of vertices in a clique of $G$
and $\chi(G)$ be the chromatic number of $G$.

Let $G$ be a graph.
For a vertex subset $S$ of $G$, we denote by $G[S]$ the subgraph of $G$ induced by $S$. 
For a vertex $v$ of $G$, we denote by $G\setminus v$ the graph obtained from $G$ by removing $v$.
For an edge $e$ of $G$, we write $G\setminus e$ to denote the subgraph obtained from $G$ by deleting $e$.
For $v\in V(G)$,
let $N_G(v)$ be the set of neighbors of $v$ in $G$.
For a set $X$ of vertices, let
$N_G(X)=(\bigcup_{v\in X} N_G(v))\setminus X$.

For two graphs $G_1$ and $G_2$, the \emph{disjoint union} of $G_1$ and $G_2$ is a graph $(V(G_1')\cup V(G_2'), E(G_1')\cup E(G_2'))$
where $G_1'$ is an isomorphic copy of $G_1$ and $G_2'$ is an isomorphic copy of $G_2$
such that $V(G_1')\cap V(G_2')=\emptyset$. If $V(G_1)\cap V(G_2)=\emptyset$, then we take $G_1'=G_1$ and $G_2'=G_2$ for convenience.

For two graphs $G_1$ and $G_2$ on disjoint vertex sets and a vertex $v\in V(G_1)$,
we say that a graph $G$ is obtained from $G_1$ by \emph{substituting} $G_2$ for $v$ in $G_1$, 
if  
\begin{itemize}
\item $V(G)=(V(G_1)\setminus \{v\})\cup V(G_2)$,
\item $E(G)=E(G_1\setminus v)
  \cup E(G_2)
  \cup \{ xy: x\in N_{G_1}(v), y\in V(G_2)\}$.
\end{itemize}

For two sets $A$, $A'$ with $A\subseteq A'$, we say that 
a function $f':A'\to B$ \emph{extends}
a function $f:A\to B$
if 
$f'(a)=f(a)$
for all $a\in A$.

\begin{LEM}\label{lem:hereditary}
  If $\mathcal G$ is a hereditary class of graphs, then so is $\gjoin$.
\end{LEM}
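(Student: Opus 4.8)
The plan is to argue by induction on the number of disjoint-union and $1$-join operations used in a fixed construction witnessing $G\in\gjoin$, establishing the statement that every induced subgraph of $G$ again lies in $\gjoin$. Two preliminary remarks will be used throughout. First, since $\mathcal G$ is hereditary it is closed under isomorphism, and as observed in the excerpt $\gjoin$ is then closed under isomorphism as well. Second, $K_1\in\mathcal G\subseteq\gjoin$, because any hereditary class containing a nonempty graph contains $K_1$ as an induced subgraph (the degenerate situation where $\mathcal G$ has no nonempty member is trivial).

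For the base case $G\in\mathcal G$, hereditariness of $\mathcal G$ gives every induced subgraph of $G$ in $\mathcal G\subseteq\gjoin$. If $G$ is the disjoint union of $G_1,G_2\in\gjoin$ and $H=G[S]$, then writing $S_i=S\cap V(G_i)$ we have $H=G_1[S_1]\sqcup G_2[S_2]$; each factor lies in $\gjoin$ by the induction hypothesis, and if one side is empty then $H$ is the remaining factor, so in all cases $H\in\gjoin$.

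The substantive case is when $G$ is the $1$-join of $(G_1,v_1)$ and $(G_2,v_2)$. Given $H=G[S]$, set $S_1=S\cap(V(G_1)\setminus v_1)$ and $S_2=S\cap(V(G_2)\setminus v_2)$, and consider the induced subgraphs $A_i=G_i[S_i\cup\{v_i\}]$, which lie in $\gjoin$ by the induction hypothesis. When $\abs{S_1}\ge 2$ and $\abs{S_2}\ge 2$, both $A_i$ have at least three vertices, and a direct comparison of edge sets finishes this subcase: the edges of $H$ inside $S_i$ are inherited from $G_i$, while the edges of $H$ between $S_1$ and $S_2$ are exactly those between $N_{G_1}(v_1)\cap S_1=N_{A_1}(v_1)$ and $N_{G_2}(v_2)\cap S_2=N_{A_2}(v_2)$, so $H$ is precisely the $1$-join of $(A_1,v_1)$ and $(A_2,v_2)$, whence $H\in\gjoin$.

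It remains to handle the degenerate subcases in which one side, say $S_1$, has at most one vertex, where the $1$-join is not directly applicable because the size-three requirement may fail; this is the main obstacle. If $S_1=\emptyset$ then $H=G_2[S_2]\in\gjoin$ by the induction hypothesis. If $S_1=\{x\}$, then in $H$ the vertex $x$ is adjacent to $N_{G_2}(v_2)\cap S_2$ when $x\in N_{G_1}(v_1)$, and is isolated otherwise. In the first situation $x$ is adjacent to exactly the vertices of $S_2$ to which $v_2$ is adjacent in $A_2$, so sending $v_2$ to $x$ and fixing $S_2$ is an isomorphism from $A_2=G_2[S_2\cup\{v_2\}]$ onto $H$, giving $H\in\gjoin$ by isomorphism-closure; in the second situation $H=K_1\sqcup G_2[S_2]$ is a disjoint union of members of $\gjoin$. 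The case $\abs{S_2}\le 1$ is symmetric, so all cases are covered. The crux, and the part that must be handled with care, is precisely this: one cannot merely restrict the join to $S$, since the operation carries the constraint $\abs{V(G_i)}\ge 3$; the resolution is the observation that a single surviving vertex adjacent to the join-neighborhood plays exactly the structural role of the deleted apex, collapsing $H$ to an induced subgraph of the opposite factor rather than a genuine $1$-join.
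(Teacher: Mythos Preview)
Your proof is correct and follows essentially the same approach as the paper's, including the identification and handling of the boundary case where one side of the $1$-join drops below three vertices. The only cosmetic difference is that the paper inducts on $\abs{V(G)}$ and proves closure under deleting a single vertex (iterating to get arbitrary induced subgraphs), whereas you induct on the construction depth and handle an arbitrary induced subgraph $G[S]$ in one shot.
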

\begin{proof}
  We show that for every $G\in \gjoin$ and $v\in V(G)$, 
  $G\setminus v \in \gjoin$.
  We proceed by induction on $|V(G)|$.
  If $G \in \mathcal{G}$, then we are done since $\mathcal{G}$ is hereditary. 
  
  So, we may assume that $G \not\in \mathcal{G}$. 
  Then, $G$ is the disjoint union of $G_1$ and $G_2$ for some $G_1,G_2 \in \gjoin$ 
  or the $1$-join of $(G_1,v_1)$ and $(G_2,v_2)$ for  some $G_1,G_2 \in \gjoin$  
  and $v_i \in V(G_i)$ for $i=1,2$ where $|V(G_1)|, |V(G_2)|\ge 3$.
  
  In the first case, we may assume that $v\in V(G_1)$. 
  Then, by the induction hypothesis, $G_1\setminus v \in \gjoin$, 
  and since $G\setminus v$ is the disjoint union of $G_1\setminus v$ and $G_2$, 
  it follows that $G\setminus v$ is contained in $\gjoin$.
  
  In the second case, by symmetry, we may assume that $v\in V(G_1)\setminus \{v_1\}$.
  By the induction hypothesis, $G_1\setminus v\in\gjoin$.
  If $|V(G_1)|>3$, then $G\setminus v$ is in $\gjoin$ 
   since it is the $1$-join of $(G_1\setminus v, v_1)$ and $(G_2,v_2)$.
   If $\abs{V(G_1)}=3$, then $G\setminus v$ is isomorphic to either $G_2$ or the disjoint union of  $K_1$ and $G_2\setminus v_2$. In either case $G\setminus v$ is contained in $\gjoin$. 
 \end{proof}

 \paragraph{Vertex-minors.}
 Now we are going to define vertex-minors.
 Actually, the readers may skip this part, if they assume Lemma~\ref{lem:ko} and Theorem~\ref{THM:pathtocycle} from other papers.
 
For a vertex $v$ in a graph $G$, the \emph{local complementation} at $v$
results in the graph obtained from $G$ by replacing the subgraph of $G$ induced on $N_G(v)$ 
by its complement.
We write $G*v$ to denote the graph obtained from $G$ by applying
local complementation at $v$.
In other words, $G*v$ is a graph on $V(G)$ such that
two distinct vertices $x$, $y$ are adjacent in $G*v$
if and only if
exactly one of the following holds.
\begin{enumerate}[(i)]
\item Both $x$ and $y$ are neighbors of $v$ in $G$.
\item $x$ is adjacent to $y$  in $G$.
\end{enumerate}
A graph $H$ is  \emph{locally equivalent} to $G$ if $H$ can be obtained from $G$ by a sequence of local complementations.
We say that a graph $H$ is a \emph{vertex-minor} of a graph $G$
if $H$ is an induced subgraph of a graph locally equivalent to $G$.

For an edge $uv$ of a graph $G$, the \emph{pivot} at $uv$
is an operation to obtain $G*v*u*v$ from $G$. We write $G\pivot uv:=G*u*v*u$.
We say that a graph $H$ is a \emph{pivot-minor} of a graph $G$
if $H$ is obtained from $G$ by a sequence of pivots and vertex deletions.

\section{Polynomially $\chi$-boundedness for $1$-join}\label{sec:proof}
For a class $\mathcal G$ of graphs, 
let $\gsub$ be the closure of $\mathcal G$ under disjoint union and substitution. 
We will use the following result due to Chudnovsky, Penev, Scott, and Trotignon~\cite{CPST2013}.
\begin{THM}[Chudnovsky, Penev, Scott, and Trotignon~\cite{CPST2013}]\label{thm:substitution}
  If $\mathcal{G}$ is a polynomially $\chi$-bounded class of graphs, 
  then $\gsub$ is polynomially $\chi$-bounded.
\end{THM}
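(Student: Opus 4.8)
The plan is to prove the statement by induction on $\abs{V(G)}$ for $G\in\gsub$, organised along the modular decomposition of $G$. First I would record how $\w$ and $\chi$ behave under a substitution. If $G$ arises by substituting $G_2$ for a vertex $v$ of $G_1$, then any clique of $G$ meets $V(G_2)$ in a clique of $G_2$ and meets the rest in a clique inside $N_{G_1}(v)$, which gives $\w(G)=\max\{\w(G_2)+\w(G_1[N_{G_1}(v)]),\ \w(G_1\setminus v)\}$. For colourings the point is that each module may be handed a palette: two modules joined by all edges need disjoint palettes, non-adjacent modules may reuse colours, and inside a module we need a proper colouring. Thus colouring a substitution reduces to a \emph{weighted} colouring of the quotient, in which vertex $i$ demands $\chi(H_i)$ colours and adjacent vertices receive disjoint demands.

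At the root of the modular decomposition $G$ is of one of three kinds. If $G$ is a disjoint union of $G_1,\dots,G_t$, then $\chi(G)=\max_i\chi(G_i)$ and $\w(G)=\max_i\w(G_i)$, so any nondecreasing bound survives. If $G$ is a join, then $\chi(G)=\sum_i\chi(G_i)$ and $\w(G)=\sum_i\w(G_i)$, so a \emph{super-additive} bounding function, for which $\sum_i f^*(x_i)\le f^*\!\bigl(\sum_i x_i\bigr)$, survives; this holds for $f^*(x)=Cx^{d}$ with $d\ge1$. The remaining, and only substantial, case is a \emph{prime} node: here $G$ is obtained from a prime quotient $Q$, which by heredity belongs to $\mathcal G$, by substituting the maximal proper modules $H_1,\dots,H_k$ for its vertices. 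The induction hypothesis gives $\chi(H_i)\le f^*(\w(H_i))$, while $\w(G)=\max_{K}\sum_{i\in K}\w(H_i)$ over cliques $K$ of $Q$, and the reduction above bounds $\chi(G)$ by the weighted chromatic number of $Q$ with demands $\chi(H_i)$.

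The heart of the argument is to colour this weighted $Q$ by \emph{layering in the demands}. For each scale $j\ge0$ let $W_j=\{i:\chi(H_i)\ge 2^{j}\}$; I would take, for each $j$, a proper colouring of $Q[W_j]$ using $\chi(Q[W_j])\le f(\w(Q[W_j]))$ colours (legitimate since $Q[W_j]\in\mathcal G$), repeat it $2^{j}$ times on a palette private to scale $j$, and superimpose the scales. Since $\sum_{j\ge0}2^{j}\mathbf 1[\chi(H_i)\ge 2^{j}]\ge\chi(H_i)$, every module receives enough colours, and adjacent modules stay disjoint scale by scale. The decisive step is to feed the induction hypothesis back into the thresholds: $i\in W_j$ forces $\w(H_i)\ge c_0\,2^{\,j/d}$ for a fixed constant $c_0>0$, so a clique of $Q[W_j]$ of size $s$ certifies $\w(G)\ge s\cdot c_0\,2^{\,j/d}$ and hence $\w(Q[W_j])\le c_0^{-1}\,\w(G)\,2^{-j/d}$. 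With $d=\deg f+1$, the $j$-th contribution $2^{j}\chi(Q[W_j])$ is then at most a constant times $\w(G)^{\deg f}\,2^{\,j/d}$; the series in $j$ is geometric and dominated by its top scale, where $2^{\,j/d}\le\max_i\w(H_i)\le\w(G)$, so the whole sum is at most a constant times $\w(G)^{\deg f+1}=\w(G)^{d}$. Choosing $C$ once so that this constant is absorbed, the prime step returns exactly $\chi(G)\le f^*(\w(G))$, and \emph{no factor $\chi(Q)$ ever appears}.

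The crux, and the step I expect to be hardest to get right, is this degree bookkeeping. The obvious estimates all lose a multiplicative factor at every prime node: the additive bound $\chi(G)\le\chi(G_2)+\chi(G_1\setminus v)$ loses a factor near $2$, and the weighted bound $\chi(Q)\cdot\max_i\chi(H_i)\le f(\w(Q))\max_i\chi(H_i)$ loses a factor $f(\w(Q))$. Because the modular decomposition can have depth comparable to $\w(G)$—already for graphs built by repeatedly substituting into a fixed graph—iterating any such factor yields an exponential $\chi$-bound; this is precisely the source of the exponential functions in the earlier $1$-join results of Dvo\v{r}\'ak--Kr\'al and Kim. Everything therefore hinges on producing a per-node estimate whose only loss is a constant that can be fixed once and for all, uniformly in the depth. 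The layered colouring above, together with folding the induction hypothesis into the scale thresholds so that the bound is expressed directly in $\w(G)$ rather than in the demands, is what achieves this and lets a single polynomial $f^*$ of degree $\deg f+1$ close the induction.
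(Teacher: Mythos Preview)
First, note that the paper does not prove Theorem~\ref{thm:substitution}: it is quoted from \cite{CPST2013} and used as a black box in the proof of Theorem~\ref{THM:main}. There is therefore no proof in this paper to compare your attempt against.

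As for the attempt itself, there is a genuine gap at exactly the point you flag as the crux. You assert that the prime-node estimate yields $\chi(G)\le K\,\omega(G)^{d}$ for a constant $K$, and that ``choosing $C$ once so that this constant is absorbed'' closes the induction with $f^*(x)=Cx^{d}$. But $K$ is not independent of $C$: your $c_0$ comes from the induction hypothesis $\chi(H_i)\le C\,\omega(H_i)^{d}$, so $c_0=C^{-1/d}$, and tracing it through your bounds gives
\[
K \;=\; A\cdot\frac{r}{r-1}\cdot C,\qquad r=2^{1/d},
\]
where $A$ is the leading constant of $f$. Closing the induction therefore requires $A\,\frac{r}{r-1}\le 1$, a condition on $f$ and $d$ alone that typically fails (already for $f(x)=x$ and $d=2$ it reads $\sqrt{2}/(\sqrt{2}-1)\le 1$). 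No choice of $C$ helps.

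To see the failure concretely, let $G_k$ be obtained by substituting $C_5$ into a single vertex of $C_5$, then into a single vertex of the result, and so on $k$ times. Then $\omega(G_k)=k+2$, the modular decomposition has $k$ nested prime levels, and at each level your dyadic layering (one nontrivial module, four trivial ones) gives $\chi(G_k)\le 2\,\chi(G_{k-1})+1$. Iterating yields a bound of order $2^{\,\omega(G_k)}$, whereas in fact $\chi(G_k)=\omega(G_k)+1$. So each prime level loses a fixed multiplicative factor strictly greater than~$1$, precisely the compounding you warn against in your final paragraph; the argument as written does not avoid it. Getting a single polynomial to close this recursion is the entire content of \cite{CPST2013}, and it requires more than a dyadic layering with a free leading constant.
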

	The following observation relates two graph classes $\gjoin$ and $\gsub$.
\begin{LEM}\label{LEM:substitution}
Let $\mathcal G$ be a hereditary class of graphs. If $G\in \gjoin$ and $v\in V(G)$, then $G[N_G(v)] \in \gsub$.
\end{LEM}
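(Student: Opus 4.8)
The plan is to prove $G[N_G(v)] \in \gsub$ by induction on $|V(G)|$, following the recursive structure of $\gjoin$. The base case is $G \in \mathcal G$; then $G[N_G(v)]$ is an induced subgraph of $G$, hence lies in $\mathcal G$ (since $\mathcal G$ is hereditary), and therefore in $\gsub \supseteq \mathcal G$. For the inductive step I would split according to how $G$ is built. If $G$ is the disjoint union of $G_1, G_2 \in \gjoin$, then $v$ lies in one part, say $v \in V(G_1)$, and $N_G(v) = N_{G_1}(v)$, so $G[N_G(v)] = G_1[N_{G_1}(v)]$, which is in $\gsub$ by the induction hypothesis applied to $G_1$. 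The real content is the case where $G$ is the $1$-join of $(G_1, v_1)$ and $(G_2, v_2)$.

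First I would set up notation for the $1$-join case. By symmetry assume $v \in V(G_1) \setminus \{v_1\}$. Write $A = N_{G_1}(v_1)$ for the set of neighbors of $v_1$ in $G_1$ and $B = N_{G_2}(v_2)$ for the neighbors of $v_2$ in $G_2$; by construction, every vertex of $A$ (now a vertex of $G$) is adjacent in $G$ to every vertex of $B$. The key observation is that the neighborhood of $v$ in $G$ depends on whether $v$ is adjacent to $v_1$ in $G_1$. If $v \notin A$ (i.e.\ $v$ is not adjacent to $v_1$), then $N_G(v) = N_{G_1}(v)$ lives entirely inside $V(G_1) \setminus \{v_1\}$, and $G[N_G(v)] = (G_1 \setminus v_1)[N_{G_1}(v)]$ is an induced subgraph of $G_1 \setminus v_1 \in \gjoin$, so I can apply the induction hypothesis (to $G_1$, or to $G_1 \setminus v_1$ combined with heredity of $\gjoin$ from Lemma~\ref{lem:hereditary}) to conclude. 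If instead $v \in A$, then in $G$ the vertex $v$ is adjacent to $N_{G_1}(v) \setminus \{v_1\}$ together with all of $B$, so $G[N_G(v)]$ decomposes into the part inside $G_1$ and the part $B$ inside $G_2$.

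The crux is to recognize this decomposition as a substitution. When $v \in A$, every vertex of $N_{G_1}(v) \cap A$ is joined to all of $B$, while vertices of $N_{G_1}(v) \setminus A$ (neighbors of $v$ not adjacent to $v_1$) have no edges to $B$. Thus $G[N_G(v)]$ is obtained from the graph $G_1[N_{G_1}(v)]$ by taking one vertex that plays the role of the ``$A$-part'' and substituting $G_2[B]$ for it—more precisely I expect $G[N_G(v)]$ to be the substitution of $G_2[B] = G_2[N_{G_2}(v_2)]$ into $G_1[N_{G_1}(v)]$ at a suitable vertex (or into a modification thereof using a disjoint-union/substitution combination). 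By the induction hypothesis $G_1[N_{G_1}(v)] \in \gsub$, and $G_2[N_{G_2}(v_2)] \in \gsub$ as well, so their substitution stays in $\gsub$ since $\gsub$ is closed under substitution by definition.

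The main obstacle I anticipate is making the substitution description exact: the subtlety is that $N_{G_1}(v)$ may contain several vertices of $A$ (all joined to all of $B$) as well as vertices outside $A$, so I cannot literally substitute for a single vertex of $G_1[N_{G_1}(v)]$ unless the $A$-neighbors of $v$ form a module. The clean way around this is to realize $G[N_G(v)]$ as built in stages: first form $G_1[N_{G_1}(v)]$, then substitute the graph $G_2[B]$ for \emph{each} vertex in $N_{G_1}(v) \cap A$ would overcount edges, so instead I would argue that the set $N_{G_1}(v) \cap A$ is a module of $G[N_G(v)]$ with respect to $B$, and handle the join between this module and $G_2[B]$ via a single substitution after contracting the module's external behavior. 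I expect the cleanest formulation uses that $\gsub$ is closed under both substitution and disjoint union, allowing the complete bipartite connection between $N_{G_1}(v)\cap A$ and $B$ to be realized as substituting $G_2[B]$ into a single auxiliary vertex adjacent to exactly $N_{G_1}(v)\cap A$, which lives in $G_1$'s side as (essentially) the vertex $v_1$; verifying that this auxiliary graph is itself in $\gsub$ and that the resulting substitution reproduces $G[N_G(v)]$ exactly is the technical heart of the argument.
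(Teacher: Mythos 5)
Your overall strategy (induction on $\abs{V(G)}$, the same three-way case split, and resolving the $1$-join case by a substitution of $G_2[N_{G_2}(v_2)]$ into $G_1[N_{G_1}(v)]$) is exactly the paper's, but you stop short of the one observation that makes the argument close, and in its place you raise an obstacle that does not exist. You worry that $N_{G_1}(v)\cap A$ may contain several vertices and may not form a module, so that one "cannot literally substitute for a single vertex," and you propose to fix this with contractions and an auxiliary vertex whose membership in $\gsub$ you leave as "the technical heart" to be verified. The missing point is this: in the case you are treating, $v$ is adjacent to $v_1$ in $G_1$, so $v_1$ itself is a vertex of $G_v:=G_1[N_{G_1}(v)]$, and the neighborhood of $v_1$ inside $G_v$ is precisely $N_{G_1}(v)\cap A$. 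Hence the "auxiliary vertex adjacent to exactly $N_{G_1}(v)\cap A$" that you try to engineer is already present, namely $v_1$, and the auxiliary graph is $G_v$ itself, which lies in $\gsub$ by the induction hypothesis applied to $G_1$. Substituting $G_2':=G_2[N_{G_2}(v_2)]$ for $v_1$ in $G_v$ reproduces $G[N_G(v)]$ on the nose: the definition of substitution joins every vertex of $G_2'$ to every $G_v$-neighbor of $v_1$, i.e.\ to $N_{G_1}(v)\cap A$, which is exactly the set of edges the $1$-join creates inside $N_G(v)$. No module verification or contraction is needed; the module structure is what substitution produces, not a hypothesis you must check.

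One further small case you omit: if $N_{G_2}(v_2)=\emptyset$, then $G_2'$ has no vertices and the substitution is degenerate. Here $v_1\in N_{G_1}(v)$ but $v$ gains no neighbors from $G_2$, so $G[N_G(v)]=G_v\setminus v_1$, and the paper concludes by heredity of $\gsub$ (which follows from heredity of $\mathcal G$) rather than by a substitution. With the identification of $v_1$ as the substitution vertex and this degenerate case added, your outline becomes the paper's proof.
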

\begin{proof}
We prove by induction on $\abs{V(G)}$.

If $G\in \mathcal{G}$, then we are done, since $\mathcal{G}$ is closed under induced subgraphs and $\mathcal{G}\subseteq \gsub$.
If $G$ is the disjoint union of two graphs $G_1, G_2$ from $\gjoin$ and $v\in V(G_1)$, 
then by the induction hypothesis on the graph $G_1$, the claim follows.

Suppose that $G$ is the $1$-join of two graphs $(G_1,v_1)$ and $(G_2,v_2)$ where
$G_1,G_2\in\gjoin$ and $\abs{V(G_1)}, \abs{V(G_2)}\ge 3$. 
Without loss of generality, we may assume that $v\in V(G_1\setminus v_1)$.
 Let $G_1[N_{G_1}(v)]=G_v$, and $G_2[N_{G_2}(v_2)]=G_2'$.
 Then, by the induction hypothesis, $G_v\in \gsub$, and $G_2' \in \gsub$ because $\abs{V(G_1)}, \abs{V(G_2)}<\abs{V(G)}$.

 Since $\mathcal G$ is hereditary, so is $\mathcal \gsub$.
 We may assume that $G_2'$ has at least one vertex
 because otherwise $G[N_G(v)]=G_v\setminus v_1\in\mathcal \gsub$.
 We may also assume that $v$ is adjacent to $v_1$ in $G_1$
 because otherwise $G[N_G(v)]=G_v\in\mathcal \gsub$.
 Then $G[N_G(v)]$ can be obtained from $G_v$ by substituting $G_2'$ for $v_1$ and therefore
 $G[N_G(v)]$ belongs to $\gsub$. This completes the proof.
\end{proof}

Let us now define a structure to describe how a connected graph in $\gjoin$
is composed from graphs in $\mathcal G$.
A \emph{composition tree} is a triple $(T,\phi, \psi)$ of a tree $T$, a map $\phi$ defined on $V(T)$ and a map $\psi$ defined on $E(T)$ such that
\begin{itemize}
\item for $t\in V(T)$, $\phi(t)$ is a connected graph, say $G_t$, on at least $3$ vertices where
  graphs in $\{G_t: t\in V(T)\}$ are vertex-disjoint,
\item for $st \in E(T)$, $\psi(st)=\{u,v\}$ for some $u\in V(G_s)$ and $v\in V(G_t)$, and
\item for distinct $e_1 \neq e_2 \in E(T)$, $\psi(e_1)$ and $\psi(e_2)$ are disjoint.
\end{itemize}
If a composition tree $(T,\phi,\psi)$ is given, then one can construct a connected graph $G$ from $(T,\phi,\psi)$ by taking $1$-joins repeatedly as follows: 
\begin{itemize}
\item if $|V(T)|=1$, say $V(T)=\{t\}$, then $G=\phi(t)$.
\item if $|V(T)|>1$, let $e=t_1t_2 \in E(T)$ and $T_i$ be the subtree of $T\setminus e$ containing $t_i$ for each $i=1,2$. Let $\phi_i$ be the restriction of $\phi$ on $V(T_i)$ and $\psi_i$ be the restriction of $\psi$ on $E(T_i)$ for each $i=1,2$. Let $G_i$ be a graph constructed from $(T_i,\phi_i,\psi_i)$ for $i=1,2$. Then, $G$ is the $1$-join of $(G_1,v_1)$ and $(G_2,v_2)$ where $v_i \in V(G_i)\cap \psi(e)$.
  It is straightforward to see that the choice of $e$ does not make any difference
  to the obtained graph $G$.
\end{itemize}
If a vertex $v$ of $\phi(t)$ for some node $t$ of $T$
is in $\psi(e)$ for some edge $e$ of $T$, then $v$ is called a \emph{marker vertex}.
After applying all $1$-joins, marker vertices will disappear.

\begin{LEM}\label{lem:composition}
  Let $\mathcal G$ be a class of graphs.
  Let $G$ be a connected graph in $\gjoin$
  with at least three vertices.
  Then there exists a composition tree $(T,\phi,\psi)$
  that constructs $G$
  such that $\phi(t)\in \mathcal G$ for every node $t$ of $T$.
\end{LEM}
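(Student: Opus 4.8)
The plan is to proceed by induction on $\abs{V(G)}$, following the recursive description of the closure $\gjoin$. If $G\in\mathcal G$, then since $G$ is connected and has at least three vertices, I take $T$ to be the one-vertex tree whose single node $t$ is mapped by $\phi$ to $G$, with $\psi$ the empty map; this is a composition tree constructing $G$ whose unique node graph lies in $\mathcal G$. Otherwise $G$ is a disjoint union of two graphs in $\gjoin$ or a $1$-join of $(G_1,v_1)$ and $(G_2,v_2)$ with $G_1,G_2\in\gjoin$ and $\abs{V(G_1)},\abs{V(G_2)}\ge 3$. A disjoint union of two nonempty graphs is disconnected, so the connectedness of $G$ rules this case out, leaving the $1$-join case.

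The key preliminary step is to show that if the $1$-join $G$ is connected, then both $G_1$ and $G_2$ are connected, so that the induction hypothesis can be applied to them. I would argue at the level of connected components: a component $C$ of $G_1\setminus v_1$ that avoids $N_{G_1}(v_1)$ receives none of the edges added in the $1$-join and hence is an entire component of $G$; connectedness of $G$ then forces $V(G_2)\setminus\{v_2\}=\emptyset$, contradicting $\abs{V(G_2)}\ge 3$. Thus every component of $G_1\setminus v_1$ meets $N_{G_1}(v_1)$, making $G_1$ connected, and $G_2$ is connected by symmetry. Since $\abs{V(G)}=\abs{V(G_1)}+\abs{V(G_2)}-2$ with both sides of size at least three, we get $\abs{V(G_1)},\abs{V(G_2)}<\abs{V(G)}$, so the induction hypothesis yields composition trees $(T_1,\phi_1,\psi_1)$ and $(T_2,\phi_2,\psi_2)$ for $G_1$ and $G_2$ with all node graphs in $\mathcal G$.

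I then assemble $(T,\phi,\psi)$ by taking the disjoint union of $T_1$ and $T_2$ and joining them by one new edge $e=st$, where $s$ is the node of $T_1$ whose graph contains the vertex $v_1$ and $t$ is the node of $T_2$ whose graph contains $v_2$; these nodes are unique because each vertex of $G_i$ lies in exactly one node graph of $T_i$. I set $\phi=\phi_1\cup\phi_2$, let $\psi$ agree with $\psi_1,\psi_2$ on the old edges, and put $\psi(e)=\{v_1,v_2\}$. Adding a single edge between two trees gives a tree, and the node graphs are unchanged, so they remain connected with at least three vertices and in $\mathcal G$. The one nontrivial axiom to verify is that $\psi(e)$ is disjoint from each $\psi_i(e')$: this is exactly where I use that the vertices of the graph constructed by a composition tree are precisely its non-marker vertices, so that $v_i\in V(G_i)$ guarantees $v_i$ is not a marker vertex of $T_i$ and hence lies in no $\psi_i(e')$.

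Finally, applying the construction rule with the new edge $e$ as the chosen edge shows that $(T,\phi,\psi)$ builds the $1$-join of the graph constructed from $T_1$ and the graph constructed from $T_2$ at the marker vertices $v_1$ and $v_2$, that is, exactly $G$; here I rely on the stated remark that the order in which edges are processed does not affect the resulting graph. I expect the main obstacle to be precisely the bookkeeping of the previous paragraph---ensuring that $v_1$ and $v_2$ are still available as the marker vertices of the joining edge and have not already been consumed inside $T_1$ or $T_2$---together with the component argument guaranteeing that both sides of a connected $1$-join are connected, which is what makes the induction hypothesis applicable.
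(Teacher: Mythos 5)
Your proof is correct and follows essentially the same route as the paper's: induction on $\abs{V(G)}$, using connectedness to rule out the disjoint-union case, observing that both sides of the $1$-join are connected so the induction hypothesis applies, and then gluing the two composition trees along a new edge labelled $\{v_1,v_2\}$. The paper's proof is simply a terser version of this argument; your component argument for the connectedness of $G_1$ and $G_2$ and the check that $v_1,v_2$ are non-marker vertices are details the paper leaves implicit.
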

\begin{proof}
  We proceed by induction on $\abs{V(G)}$.
  We may assume that  $G\notin \mathcal G$.
  Since $G$ is connected,
  $G$ is the $1$-join of $(G_1,v_1)$ and $(G_2,v_2)$
  for some graphs $G_1$, $G_2$ in $\gjoin$
  and $v_1\in V(G_1)$, $v_2\in V(G_2)$
  where $\abs{V(G_1)}, \abs{V(G_2)}\ge 3$.
  Since $G$ is connected, both $G_1$ and $G_2$ are connected.
  By the induction hypothesis, we obtain two composition trees.
  We can combine them to obtain a composition tree $(T,\phi,\psi)$ constructing $G$.
\end{proof}

\begin{LEM}\label{LEM:main}
  Let $c_1$, $c_2$ be positive integers.
  Let $G$ be a connected graph constructed by a composition tree
  $(T,\phi,\psi)$ such that
  $\phi(t)$ is $c_1$-colorable for each node $t$ of $T$
  and 
  $G[N_G(w)]$ is $c_2$-colorable
  for each vertex $w$ of $G$.
  Let $v$ be a vertex of $G$.
  Then for every proper $c_2$-coloring $\beta$ of $G[N_G(v)]$, there
  exist functions $\alpha':V(G)\setminus \{v\}\to \{0,1,2,\ldots,c_1\}$
  and
  $\beta':V(G)\setminus\{v\}\to\{1,2,\ldots,c_2\}$ 
  such that
  \begin{enumerate}
  \item [(1)]
    $\alpha'(w)=0$ and $\beta'(w)=\beta(w)$ for every neighbor $w$  of $v$,
  \item [(2)]
  $c=\alpha'\times \beta'$ is a proper $(c_1+1)c_2$-coloring of $G\setminus v$.
  \end{enumerate}

\end{LEM}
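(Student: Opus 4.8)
I would prove this by induction on $\abs{V(T)}$, treating the pair $(\alpha',\beta')$ as the two coordinates of a product coloring $c=\alpha'\times\beta'$ into $\{0,1,\dots,c_1\}\times\{1,\dots,c_2\}$. The guiding idea is that the first coordinate $\alpha'$ is a \emph{bag-local} proper $c_1$-coloring, with the colour $0$ reserved for $N_G(v)$, while the second coordinate $\beta'$ carries the prescribed coloring $\beta$ on $N_G(v)$ and is used to separate edges on which $\alpha'$ fails to distinguish its endpoints. I expect it to be convenient to strengthen the statement slightly, e.g.\ to record which vertices receive $\alpha'=0$, so that the inductive hypothesis controls the interface colours precisely.

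For the base case $\abs{V(T)}=1$ we have $G=\phi(t)$, which is $c_1$-colorable. I would set $\alpha'(w)=0,\ \beta'(w)=\beta(w)$ on $N_G(v)$, and on the remaining vertices take $\alpha'$ to be a proper $c_1$-coloring of $G\setminus N_G[v]$ with values in $\{1,\dots,c_1\}$ and $\beta'$ arbitrary. Properness of $c$ is then immediate: an edge inside $N_G(v)$ is separated by $\beta'=\beta$ (as $\beta$ is proper on $G[N_G(v)]$), an edge inside $V(G)\setminus N_G(v)$ is separated by $\alpha'$, and a crossing edge is separated by $\alpha'$ ($0$ versus nonzero).

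For the inductive step I would write $G$ as the $1$-join of $(G_1,w_1)$ and $(G_2,w_2)$ obtained by splitting $T$ at a tree edge, arranged so that $v\in V(G_1)\setminus\{w_1\}$; write $A=N_{G_1}(w_1)$ and $B=N_{G_2}(w_2)$, which are nonempty since $G$ is connected. Both $G_1$ and $G_2$ arise from strictly smaller composition trees and inherit the hypotheses; in particular $G_2[B]$ and $G_1[A]$ are $c_2$-colorable, because every $b\in B$ is adjacent to all of $A$ and every $a\in A$ is adjacent to all of $B$, so these interface sets are induced subgraphs of neighborhoods $G[N_G(a)]$, $G[N_G(b)]$. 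The plan is to apply the induction hypothesis to $G_1$ at $v$ (feeding it the restriction of $\beta$ to $N_{G_1}(v)$) and to $G_2$ at the \emph{marker} $w_2$ (feeding it a proper $c_2$-coloring $\beta_2$ of $G_2[B]$). The key payoff of recursing at $w_2$ is that, by conclusion~(1) applied to $G_2$, the whole interface $B$ receives $\alpha'=0$ and $\beta'=\beta_2$; hence every junction edge $ab$ with $a\in A$ carrying $\alpha'(a)\neq 0$ is separated by the first coordinate, and the intra-bag edges are already handled by the two local colorings. One then glues the two colorings, discarding the markers, and checks each edge class.

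The main obstacle is exactly the complete bipartite junction between $A$ and $B$ on the vertices that are forced into the colour $0$: those $a\in A\cap N_G(v)$, for which conclusion~(1) pins down $\alpha'(a)=0$ and $\beta'(a)=\beta(a)$, clash in the first coordinate with all of $B$ and must be separated by $\beta$ alone, i.e.\ one needs $\beta(a)\neq\beta_2(b)$ for all such $a$ and all $b\in B$. I would resolve this by choosing the splitting edge to be incident to the bag containing $v$ and, whenever possible, so that $v$ is adjacent to the marker $w_1$: then $v\in A$ forces $B\subseteq N_G(v)$, so one may simply take $\beta_2=\beta|_B$, and the required inequality becomes $\beta(a)\neq\beta(b)$, which holds because $a$ and $b$ are adjacent and $\beta$ is proper on $G[N_G(v)]$. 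The remaining configuration, where $v$ lies in no interface (so $N_G(v)$ is confined to a single bag) or is not adjacent to the chosen marker, is where the argument is most delicate: there $B\cap N_G(v)=\emptyset$, and I would exploit the freedom in $\beta_2$ together with the $c_2$-colorability of the interface sets to keep the colours on $B$ disjoint from the forced colours on $A\cap N_G(v)$, relabelling one side's palette if necessary so that the reserved value $0$ continues to mark precisely the neighbors of $v$. Verifying that this bookkeeping can always be carried out within the budget of $c_2$ colours on each side is the crux of the proof.
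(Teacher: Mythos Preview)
Your architecture (induction on the tree, product colouring $\alpha'\times\beta'$ with $0$ reserved for $N_G(v)$, recursing at the marker on the far side) matches the paper, and your base case and the sub-case $v\sim w_1$ are handled correctly. The gap is exactly at the point you call ``the crux'', and your proposed fix does not close it. When $v\not\sim w_1$ and you recurse on $G_1$ at $v$, the pairs $(\alpha_1'(a),\beta_1'(a))$ on the interface $A=N_{G_1}(w_1)$ are essentially uncontrolled: all the induction hypothesis tells you is that they avoid the single pair $(\alpha_1'(w_1),\beta_1'(w_1))$. If you then swap $0$ with some $k\neq 0$ on the $G_2$ side so that $B$ carries first coordinate $k$, every $a\in A$ with $\alpha_1'(a)=k$ forces $\beta_2(b)\neq\beta_1'(a)$ for all $b\in B$; the set of forbidden second coordinates can have size $c_2-1$, leaving only one admissible value for $\beta_2$, while $G_2[B]$ may need up to $c_2$ colours. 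No choice of $k$, of $\beta_2$, or of a more general permutation of the $G_2$ palette rescues this in general, because $A$ can occupy all but one of the $(c_1+1)c_2$ colours and the induction hypothesis gives you no finer information.

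The paper resolves this by changing the decomposition: rather than splitting $T$ at one edge, it removes the \emph{entire} bag $G_0=\phi(t_0)$ containing $v$ and recurses separately on every pendant piece $G_i$ at its marker $u_i$. A single proper $c_1$-colouring $h$ of $G_0$ then governs all interfaces simultaneously. When the marker $v_i\in V(G_0)$ is non-adjacent to $v$, one swaps $0$ with $h(v_i)$ in the returned $\alpha_i'$, so that $N_{G_i}(u_i)$ carries first coordinate $h(v_i)$; any junction neighbour $x$ of this interface has $\alpha'(x)\in\{0,h(x)\}$ (or $\{0,h(v_j)\}$ if $x$ lies in another piece $G_j$), and since $h$ is a proper colouring of the single bag $G_0$ and $x$ (respectively $v_j$) is adjacent to $v_i$ there, $\alpha'(x)\neq h(v_i)$ automatically. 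The point is that the first coordinate on every interface is dictated by the bag-local colouring $h$, not by a recursive colouring of a large $G_1$; your single-edge split loses precisely this control.
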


\begin{proof}
	We proceed by induction on $|V(G)|$.

	If $\abs{V(T)}=1$, then $G=\phi(t)$ for the unique node $t$ of $T$
        and so $G$ has a proper $c_1$-coloring $h: V(G\setminus v) \to \{1,2,\ldots,c_1\}$.
	We define $\alpha'$ and $\beta'$ on $V(G\setminus v)$ as follows: 
        \[
\alpha'(w)=
\begin{cases}
  0 &\text{if $w\in N_G(v)$},\\
  h(w) &\text{otherwise,}
\end{cases}
\quad\text{and}\quad
\beta'(w)=
\begin{cases}
  \beta(w) &\text{if $w\in N_G(v)$},\\
  1 &\text{otherwise.}
\end{cases}
        \]
	Clearly, $\alpha' \times \beta'$ is a proper $(c_1+1)c_2$-coloring of $G\setminus v$.

	Thus we may assume $\abs{V(T)}>1$.
	Let $t_0$ be the unique node of $T$ such that $v \in V(\phi(t_0))$. Let $G_0:=\phi(t_0)$.
	Let $t_1$, $t_2$, $\ldots$, $t_m$ be the neighbors of $t_0$ in $T$.
        For each $i\in\{1,2,\ldots,m\}$,
        let $v_i\in V(G_0)$ and $u_i\in V(\phi(t_i))$ be vertices such that
        $\psi(t_0t_i)=\{v_i,u_i\}$
        and let $T_i$ be the connected component of $T\setminus t_0$ containing $t_i$.
        For each $i\in\{1,2,\ldots,m\}$,
        let $\phi_i$ be the restriction of $\phi$ on $V(T_i)$
        and $\psi_i$ be the restriction of $\psi$ on $E(T_i)$
        and $G_i$ be the graph constructed from a composition tree $(T_i,\phi_i,\psi_i)$.

        Let $h :V(G_0)\to \{1,2,\ldots,c_1\}$ be a proper  $c_1$-coloring of $G_0$.
	Let $\alpha'_0$, $\beta'_0$ be maps defined on $V(G_0)\setminus \{v,v_1,\ldots,v_m\}$ such that
        for $w\in V(G_0)\setminus \{v,v_1,\ldots,v_m\}$,
        \[
\alpha'_0(w)=
\begin{cases}
  0 &\text{if $w\in N_{G_0}(v)$},\\
  h(w) &\text{otherwise,}
\end{cases}
\quad\text{ and }\quad
\beta'_0(w)=
\begin{cases}
  \beta(w) &\text{if $w\in N_G(v)$},\\
  1 &\text{otherwise.}
\end{cases}
        \]

        Now we are going to define, for each $i\in\{1,2,\ldots,m\}$,  a proper $c_2$-coloring $\beta_i$ of $G_i[N_{G_i}(u_i)]$.
        If $v_i$ is adjacent to $v$ in $G_0$, then
        $N_{G_i}(u_i)$ is a subset of $N_G(v)$
        and so let us define $\beta_i$ to be 
        the proper $c_2$-coloring  of $G[N_{G_i}(u_i)]$
        induced by~$\beta$.

        If $v_i$ is non-adjacent to $v$ in $G_0$, then
        we claim that 
        there exists a vertex $y$ of $G$
        such that $N_{G_i}(u_i)\subseteq N_G(y)$.
        Since $G_0$ is connected,
        $v_i$ has a neighbor $x$ in $G_0$.
        If $x$ is not a marker vertex, then $N_{G_i}(u_i)\subseteq N_G(x)$.
        If $x$ is a marker vertex, say $x=v_j$ for some $j\neq i$, then
        there exists a neighbor $y$ of $u_j$ in $G_j$ because $G_j$ is connected and $\abs{V(G_j)}\neq 1$.
        Now we observe that $N_{G_i}(u_i)\subseteq N_G(y)$.
        This proves the claim.
        By the claim, we can define 
        $\beta_i$ as a proper $c_2$-coloring 
        of $G_i[N_{G_i}(u_i)]$ induced by 
        a proper $c_2$-coloring of $G[N_G(y)]$.

        Observe that
        $\abs{V(G_i)}<\abs{V(G)}$ for all $i\in\{1,2,\ldots,m\}$
        because $G_0$ has at least three vertices.
        Now, 
	by the induction hypothesis, for each $i\in \{1,2,\ldots,m\}$,
	there exist maps
        $\alpha'_i:V(G_i\setminus u_i )\to \{0,1,\ldots,c_1\}$ and $\beta_i':V(G_i\setminus u_i) \to \{1,2,\ldots,c_2\}$ 
	such that for every $w\in N_{G_i}(u_i)$,
        \begin{align*}
          \alpha_i'(w)&=
          \begin{cases}
            h(v_i)&\text{if  $v_i$ is non-adjacent to $v$ in $G_0$},\\
            0 & \text{otherwise,}
          \end{cases}\\
           \beta_i'(w)&=\beta_i(w), 
        \end{align*}
        and 
        $\alpha_i'\times \beta_i'$ is a proper $(c_1+1)c_2$-coloring of $G_i\setminus u_i$,
        because 
        we may swap colors $0$ and $h(v_i)$ in $\alpha'_i$
        after applying the induction hypothesis.

        Now we define maps $\alpha'$ and $\beta'$ on $V(G)\setminus\{v\}$ such that
        for $w\in V(G)\setminus \{v\}$, 
        \[
          \alpha'(w)=\alpha_i'(w)
          \text{ and }\beta'(w)=\beta'_i(w)
          \text{ if $ w\in V(G_i)$
                      for some $i\in\{0,1,2,\ldots,m\}$. }
          \]
	Clearly, $\beta'$ extends $\beta$. In addition, $\alpha'(w)=0$ for all neighbors $w$ of $v$ in $G$.

	We claim that $c=\alpha' \times \beta'$ is a proper coloring of $G\setminus v$.
	Let $x,y \in V(G\setminus v)$ be adjacent vertices in $G\setminus v$.
        If both $x$ and $y$ are neighbors of $v$,
        then $\beta'(x)=\beta(x)\neq \beta(y)=\beta'(y)$.
        So we may assume that $y$ is not a neighbor of $v$.
	\begin{itemize}
        \item If $x,y\in V(G_0)$, then 
          $\alpha'(x)\neq\alpha'(y)$  because $\alpha'(x)\in\{0,h(x)\}$ and  $\alpha'(y)=h(y)\neq0$.

	\item If $x,y \in V(G_i)$ for some $i\in\{1,2,\ldots,m\}$, then
          $(\alpha'(x),\beta'(x))\neq (\alpha'(y),\beta'(y))$
          because $\alpha_i'\times \beta_i'$ is a proper coloring of $G_i\setminus u_i$.
	\item If $V(G_0)$ contains exactly one of $x$ and $y$, say $x$, then
          there exists $i\in\{1,2,\ldots,m\}$ such that $y\in V(G_i)$.
          Then $x$ is adjacent to $v_i$ in $G_0$
          and $y$ is adjacent to $u_i$ in $G_i$.
          Since $y$ is not adjacent to $v$, 
          $v_i$ is not adjacent to $v$ in $G_0$
          and so $\alpha'(y)=\alpha_i'(y)=h(v_i)$. As $\alpha'(x)\in\{0,h(x)\}$,
          we deduce that $\alpha'(x)\neq \alpha'(y)$.

	\item If $x\in V(G_i)$ and $y \in V(G_j)$ for distinct $i,j\in\{1,2,\ldots,m\}$, then
          $x$ is adjacent to $u_i$ in $G_i$,
          $v_i$ is adjacent to $v_j$ in $G_0$,
          and $u_j$ is adjacent to $y$ in $G_j$.
          Since $y$ is not adjacent to $v$, 
          $v_j$ is not adjacent to $v$ in $G_0$
          and so $\alpha'(y)=\alpha_j'(y)=h(v_j)$. 
          Note that $\alpha'(x)\in\{0,h(v_i)\}$ and therefore
          $\alpha(x)\neq \alpha(y)$ because  $h$ is a proper coloring of $G_0$.
 	\end{itemize}
 Therefore, $c$ is a proper coloring of $G\setminus v$. This completes the proof.
	\end{proof}

\begin{proof}[Proof of Theorem~\ref{THM:main}]
  We may assume that $\mathcal G$ is hereditary,
  by replacing $\mathcal G$ with
  the closure of $\mathcal G$
  under isomorphism and taking induced subgraphs, if necessary.
  
  Let $f$ be a $\chi$-bounding function for $\mathcal{G}$ that is a polynomial.
  We may assume that $1\le f(0)\le f(1)\le f(2)\le \cdots$, by replacing $f(x)=\sum_{i} a_i x^i$ with
  $\sum_i \abs{a_i}x^i$ if needed.
  By Theorem~\ref{thm:substitution}, $\gsub$ is $\chi$-bounded
  by a polynomial $f^*$. We may assume that $1\le f^*(0)\le f^*(1)\le f^*(2)\le \cdots$.
  
  We claim that
  \[\chi(G) \le (f(\omega(G))+1)f^*(\omega(G)-1)\]
  for all $G\in \gjoin$.
  This claim implies the theorem because $\gjoin$ is hereditary by Lemma~\ref{lem:hereditary}.

Let $k=\omega(G)$.
We may assume that $k>1$.
We may assume that $G$ is connected because $\gjoin$ is hereditary
and both $f$ and $f^*$ are non-decreasing.
We may assume that $G$ has at least three vertices.
By Lemma~\ref{lem:composition},
$G$ has a composition tree $(T,\phi,\psi)$ with $\phi(x)\in \mathcal{G}$ for every node $x$ of $T$.
Note that $\omega(\phi(x))\le k$ because $\phi(x)$ is isomorphic to an induced subgraph of $G$
and therefore $\chi(\phi(x))\le f(\omega(\phi(x)))\le f(k)$.
For each vertex $w \in V(G)$,
$\omega(G[N_{G}(w)])\le k-1$ and 
$G[N_{G}(w)]$ belongs to $\gsub$ by Lemma~\ref{LEM:substitution}, and so $G[N_{G}(w)]$ is $f^*(k-1)$-colorable.
Let $v$ be a vertex of $G$.
By  Lemma~\ref{LEM:main},
there exist a proper $(f(k)+1)f^*(k-1)$-coloring  $c=\alpha'\times \beta'$
of $G\setminus v$ 
such that $\alpha'(w)=0$ for every neighbor $w$ of $v$.
Then we can easily extend this to a proper $(f(k)+1)f^*(k-1)$-coloring of $G$
by taking $\alpha'(v)\neq 0$.
\end{proof}
We remark that the same method can also prove that if $\mathcal G$
is a class of graphs having an exponential $\chi$-bounding function,
then so is $\gjoin$. This is because
Chudnovsky, Penev, Scott, and Trotignon~\cite{CPST2013}
also prove an analogue of Theorem~\ref{thm:substitution}
for classes of graphs having  exponential $\chi$-bounding functions.

\section{Graphs without
   $P_n$ or $K_{\lceil n/2\rceil} \tri \S_{\lceil n/2\rceil}$ induced subgraphs}
\label{sec:path}

Now we are ready to prove Theorem~\ref{thm:pathhalf},
which states
that
the class of graphs with no induced subgraph isomorphic to $P_n$
or $K_{\lceil n/2\rceil }\tri \S_{\lceil n/2\rceil}$ is polynomially $\chi$-bounded.

For this section,
a \emph{path} from $v$ to $w$ is a sequence $v_0v_1\cdots v_\ell$ of distinct vertices such
that
$v_0=v$, $v_\ell=w$, and 
$v_i$ is adjacent to $v_{i-1}$ for all $i=1,2,\ldots,\ell$.
We say a path $Q:=w_0w_1\cdots w_m$ \emph{extends} a path $P:=v_0v_1\cdots v_\ell$ if $m\ge \ell$ and $v_i=w_i$ for all $i\in\{0,1,\ldots,\ell\}$.

For an induced path $P$ from $v$ to $w$ in $G$,
we write $\Omega(G,P)$ to denote $G\setminus (V(P)\cup N_G(V(P\setminus w)))$.
A component of $\Omega(G,P)$ is \emph{attached to} $P$
if it contains a neighbor of $w$.
A component $C$ of $\Omega(G,P)$ is \emph{$d$-good}
if the neighbors of $w$ in $C$ induce a graph of chromatic
number larger than $d$.
We say $C$ is \emph{$d$-bad} if it is not $d$-good.
We say $P$ is \emph{$d$-good} in $G$ if $\Omega(G,P)$
has a $d$-good component.

\begin{LEM}\label{lem:start}
  Let $k$, $d$ be positive integers. 
  Let $G$ be a graph.
  If $\omega(G)\le k$ and  $\chi(G)>kd$,  then $G$ has 
  a path $P$ of length $1$
  such that $\Omega(G,P)$ has a component $C$
  attached to $P$
  and $\chi(C)>d$.
\end{LEM}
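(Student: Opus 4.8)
The plan is to reformulate the statement and reduce it to a self-contained coloring inequality. First observe that for a path $P=v_0v_1$ of length $1$ (so $v=v_0$, $w=v_1$) we have $V(P\setminus w)=\{v_0\}$, hence $\Omega(G,P)=G\setminus(N_G(v_0)\cup\{v_0\})$ depends only on $v_0$, not on the chosen neighbor $v_1$. Thus the task is to find a vertex $v_0$ such that $G\setminus(N_G(v_0)\cup\{v_0\})$ has a component $C$ with $\chi(C)>d$ that moreover has an edge to $N_G(v_0)$, so that some neighbor $v_1$ of $v_0$ has a neighbor in $C$ and $C$ is attached to $P=v_0v_1$. To dispose of the attachment condition cleanly, I would pass to a connected component $G'$ of $G$ with $\chi(G')=\chi(G)>kd$: in a connected graph every component of $G'\setminus(N_{G'}(v_0)\cup\{v_0\})$ must send an edge to $N_{G'}(v_0)$ (it is nonempty, does not contain $v_0$, and cannot be adjacent to $v_0$), so attachment is automatic; and since $v_0\in G'$, the closed neighborhood of $v_0$ is the same in $G$ and in $G'$, so such a $C$ is genuinely a component of $\Omega(G,P)$.

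The core is then the contrapositive coloring claim: \emph{if $\omega(G)\le k$ and $\chi\bigl(G\setminus(N_G(v_0)\cup\{v_0\})\bigr)\le d$ for every vertex $v_0$, then $\chi(G)\le kd$.} I would prove this by induction on $k$. The base case $k=1$ is immediate, since then $G$ has no edges and $\chi(G)\le 1\le d$.

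For the inductive step, fix any vertex $r$ and set $H=G[N_G(r)]$. Since any clique of $H$ together with $r$ is a clique of $G$, we have $\omega(H)\le k-1$. The key verification is that $H$ inherits the hypothesis with $k$ replaced by $k-1$: for $v_0\in N_G(r)$, the graph $H\setminus(N_H(v_0)\cup\{v_0\})$ is induced on those vertices of $N_G(r)$ that are distinct from and non-adjacent to $v_0$, hence it is an induced subgraph of $G\setminus(N_G(v_0)\cup\{v_0\})$ and so has chromatic number at most $d$. By induction $\chi(H)\le(k-1)d$. Now I would color $G$ with $kd$ colors using disjoint palettes: a palette $A$ of size $(k-1)d$ for $N_G(r)$ and a palette $B$ of size $d$ for $V(G)\setminus(N_G(r)\cup\{r\})$ (the latter being $d$-colorable by the hypothesis at $r$), and then give $r$ a color from $B$. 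This is proper, because the two blocks use disjoint palettes and because the neighbors of $r$ all lie in $N_G(r)$ and therefore avoid $B$. Hence $\chi(G)\le kd$, completing the induction; taking the contrapositive and combining with the connectivity reduction of the first paragraph yields the lemma.

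The step I expect to require the most care is precisely this final recoloring. The naive bound $\chi(G)\le\chi(G[N_G(r)])+1+\chi\bigl(G\setminus(N_G(r)\cup\{r\})\bigr)$ only gives $kd+1$; the whole point is to \emph{reuse} the $d$-color palette of the far part for the single vertex $r$, which is legitimate exactly because $r$ has no neighbor outside its closed neighborhood. The secondary delicate point is the reduction to a connected component, which is what guarantees that the high-chromatic component delivered by the contrapositive is actually attached to a length-$1$ path rather than floating off in another component of $G$.
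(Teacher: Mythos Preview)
Your proof is correct, but it takes a genuinely different route from the paper's.

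The paper argues directly, without induction or contrapositive. After reducing to connected $G$, it picks a \emph{maximum} clique $K$ and observes that every vertex of $G$ is non-adjacent to some $x\in K$ (otherwise $K$ could be enlarged). Hence $V(G)=\bigcup_{x\in K}V(H_x)$ where $H_x:=G\setminus N_G(x)$, giving $\chi(G)\le\sum_{x\in K}\chi(H_x)\le k\max_{x\in K}\chi(H_x)$. Since $\chi(G)>kd$, some $x\in K$ has $\chi(H_x)>d$; a highest-chromatic component $C'$ of $H_x$ then supplies the desired component, and a shortest path from $x$ to $C'$ furnishes the length-$1$ path $P$ with $C'$ attached. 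That is the whole argument.

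Your approach instead proves the contrapositive inequality $\chi(G)\le kd$ by induction on $k$, peeling off one closed neighborhood at a time and carefully reusing the $d$-color palette of the far part for the peeled vertex $r$ to avoid the extra ``$+1$''. This is valid and self-contained; in effect, unrolling your induction produces a nested sequence of vertices $r_1,r_2,\ldots$ that form a clique, so the two arguments are cousins. The paper's version is shorter because the maximum-clique covering does in one stroke what your induction does in $k$ steps, and it sidesteps the palette-reuse subtlety entirely. On the other hand, your argument never needs to locate a maximum clique and makes the inequality $\chi(G)\le k\cdot\max_v\chi\bigl(G\setminus(N_G(v)\cup\{v\})\bigr)$ explicit in a way that could be iterated in other contexts.
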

\begin{proof}
  We may assume that $G$ is connected.
  Let $K$ be a maximum clique of $G$.
  By assumption, $\abs{K}\le k$.
  For each vertex $x$ of $K$, let
  $H_x=G\setminus N_G(x)$.
  Since $K$ is a maximum clique, for every vertex $y$,
  there is $x\in K$ such that $y\in V(H_x)$
  and therefore $\chi(G)\le \sum_{x\in K}\chi(H_x)$.
  So there exists $x\in K$ such that
  $\chi(H_x)>d$.
  Let $C'$ be a component of $H_x$ such that $\chi(C')=\chi(H_x)$.
  Since $\chi(C')>1$ and $x$ is an isolated vertex in $H_x$,
  $x\notin V(C')$.
  Since $G$ is connected,
  there is a shortest path $v_0v_1\cdots v_\ell$ from $x=v_0$ to some vertex $v_\ell$ of $C'$.
  Let $v:=v_{\ell-2}$, $w:=v_{\ell-1}$, and $P:=vw$. Let $C$ be the component of $G\setminus N_G(v)$ containing $C'$.
  Then $C$ is a component attached to $P$ and $\chi(C)>d$.
\end{proof}

\begin{LEM}\label{lem:extend}
  If a graph $G$ has an induced path $P$ of length at least $1$ and
  $\Omega(G,P)$ has a $d$-bad component $C$ attached to $P$
  with $\chi(C)>d$,
  then
  there exist
  an induced path $P'$ extending $P$ by exactly $1$ edge
  and 
  a component $C'$ of $\Omega(G,P')$ attached to $P'$
  such that 
  \[\chi(C')\ge \chi(C)-d.    \]
\end{LEM}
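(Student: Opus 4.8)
The plan is to extend $P$ by appending a single neighbor of $w$ chosen to sit next to the ``expensive'' part of $C$. Write $P=v_0v_1\cdots v_\ell$ with $v=v_0$ and $w=v_\ell$, and let $S=N_G(w)\cap V(C)$ be the set of neighbors of $w$ inside $C$. Since $C$ is attached to $P$ we have $S\neq\emptyset$, and since $C$ is $d$-bad we have $\chi(G[S])\le d$. The first step is to peel $S$ off: by subadditivity of the chromatic number over a vertex partition, $\chi(C)\le \chi(G[S])+\chi(C-S)$, so the graph $C-S:=C[V(C)\setminus S]$ satisfies $\chi(C-S)\ge \chi(C)-d>0$.

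Next I select the component to promote. Let $D$ be a connected component of $C-S$ with $\chi(D)=\chi(C-S)\ge \chi(C)-d$. Because $C$ is connected, $S\neq\emptyset$, and $V(D)\subsetneq V(C)$, there must be an edge of $C$ joining $D$ to $S$; fix such an edge $tw'$ with $t\in V(D)$ and $w'\in S$. I then set $P':=v_0v_1\cdots v_\ell w'$. To see that $P'$ is an induced path extending $P$ by exactly one edge, note that $w'\in V(C)\subseteq V(\Omega(G,P))$, so $w'$ is distinct from, and non-adjacent to, each of $v_0,\ldots,v_{\ell-1}$ (these, together with their neighbors, were deleted to form $\Omega(G,P)$), while $w'$ is adjacent to $w=v_\ell$ because $w'\in S\subseteq N_G(w)$.

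It remains to locate the required component $C'$ of $\Omega(G,P')$. Unwinding the definition, $V(P'\setminus w')=V(P)$, so $\Omega(G,P')=G\setminus(V(P)\cup\{w'\}\cup N_G(V(P)))$, whence $V(\Omega(G,P'))=V(\Omega(G,P))\setminus N_G(w)$ (using $w'\in N_G(w)$). Since $D\subseteq V(C)\setminus S=V(C)\setminus N_G(w)$, all of $D$ survives into $\Omega(G,P')$; being connected, $D$ lies inside a single component $C'$ of $\Omega(G,P')$, which gives $\chi(C')\ge \chi(D)\ge \chi(C)-d$. Finally $t\in V(D)\subseteq V(C')$ is a neighbor of $w'$, so $C'$ is attached to $P'$, as required.

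The one genuinely delicate point, and the reason for choosing $w'$ the way I do, is that the extension must simultaneously (i) preserve a high-chromatic-number component and (ii) keep that component attached to the new endpoint $w'$. Taking $w'$ to be a neighbor in $S$ of the expensive component $D$ of $C-S$ resolves both at once: the edge $tw'$ certifies attachment, while $D\cap N_G(w)=\emptyset$ guarantees that passing from $\Omega(G,P)$ to $\Omega(G,P')$, which only removes $N_G(w)$ in addition, leaves $D$ intact. Everything else is bookkeeping with the definition of $\Omega$ and the standard inequality $\chi(H)\le \chi(H[S])+\chi(H-S)$.
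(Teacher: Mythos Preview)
Your proof is correct and follows essentially the same approach as the paper: strip the $w$-neighbors $S$ out of $C$, take a component $D$ of $C-S$ with $\chi(D)\ge\chi(C)-d$, and extend $P$ by a vertex $w'\in S$ adjacent to $D$. The only differences are cosmetic: you verify explicitly that $P'$ is induced and that $D$ sits inside $\Omega(G,P')$, whereas the paper simply asserts that its $C'$ (your $D$) is a component of $\Omega(G,P')$; in fact your $D$ is already a full component of $\Omega(G,P')$, not just contained in one, since removing $N_G(w)$ from $\Omega(G,P)$ splits each component $C_i$ into the components of $C_i\setminus N_G(w)$.
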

\begin{proof}
  Let $w$ be the last vertex of $P$.
  Let $C_w$ be the subgraph of $C$ induced by the neighbors of $w$.
  Since $C$ is $d$-bad,
  $\chi(C_w)\le d$ and therefore
  $\chi(C\setminus N_G(w))\ge \chi(C)-
  \chi(C_w)\ge \chi(C)-d>0$.
  So $C\setminus N_G(w)$ has a component $C'$
  with $\chi(C')\ge \chi(C)-d$.
  Since $C$ is connected,  there is a vertex $w'\in V(C_w)$
  adjacent to some vertex in $C'$.
  We obtain $P'$ by adding $w'$ as a last vertex to $P$. Then $C'$
  is a component of $\Omega(G,P')$ attached to $P'$.
\end{proof}
\begin{LEM}\label{lem:dompath}
  Let $n\ge 4$.
  Let $G$ be a graph having no induced subgraph isomorphic to $P_n$.
  Let $P$ be a path of length $1$.
  If $\Omega(G,P)$ has a component $C$ attached to $P$ with $\chi(C)>d(n-3)$,
  then $G$ has a $d$-good induced path $P'$ extending $P$.
\end{LEM}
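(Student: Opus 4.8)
The plan is to iterate Lemma~\ref{lem:extend}, treating the hypothesis $\chi(C)>d(n-3)$ as a ``budget'' that decreases by $d$ with each application, and to reach a contradiction with the absence of an induced $P_n$ if the budget is ever spent without encountering a $d$-good component. Concretely, I would argue by contradiction: suppose that no induced path extending $P$ is $d$-good. Writing $P_0:=P$ and $C_0:=C$, I would build, by induction on $i$, a sequence of induced paths $P_0,P_1,\dots$ and components $C_i$ of $\Omega(G,P_i)$ attached to $P_i$ such that $P_{i+1}$ extends $P_i$ by exactly one edge and $\chi(C_i)>d(n-3-i)$.

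For the inductive step, suppose $P_i$ and $C_i$ with $\chi(C_i)>d(n-3-i)$ have been constructed and $i\le n-4$. By the contradiction hypothesis $P_i$ is not $d$-good, so every component of $\Omega(G,P_i)$, in particular $C_i$, is $d$-bad. Since $i\le n-4$ we have $n-3-i\ge 1$, hence $\chi(C_i)>d(n-3-i)\ge d$, and Lemma~\ref{lem:extend} applies: it produces an induced path $P_{i+1}$ extending $P_i$ by one edge and a component $C_{i+1}$ attached to $P_{i+1}$ with $\chi(C_{i+1})\ge\chi(C_i)-d>d(n-3-(i+1))$. Running this from $i=0$ up to $i=n-4$ yields an induced path $P_{n-3}$ together with a component $C_{n-3}$ attached to $P_{n-3}$ with $\chi(C_{n-3})>0$; extension being transitive, each $P_i$ extends $P$.

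The key final observation is that this forces an induced $P_n$. Since $P_0=P$ has $2$ vertices and each step adds exactly one vertex, $P_{n-3}$ has $n-1$ vertices; let $w'$ be its last vertex. Because $\chi(C_{n-3})>0$, the component $C_{n-3}$ is nonempty, and since it is attached to $P_{n-3}$ it contains a neighbor $z$ of $w'$. As $z\in\Omega(G,P_{n-3})$, the vertex $z$ lies outside $V(P_{n-3})$ and is non-adjacent to every vertex of $P_{n-3}$ except $w'$; therefore appending $z$ to $P_{n-3}$ gives an induced path on $n$ vertices, i.e.\ an induced $P_n$ in $G$, contradicting the hypothesis. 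Hence some induced path extending $P$ is $d$-good, as desired. The routine parts are the chromatic bookkeeping and checking that Lemma~\ref{lem:extend} stays applicable throughout; the genuinely load-bearing step is recognizing that the nonempty attached component at the end supplies the final vertex of a forbidden induced $P_n$, and correspondingly that the threshold $d(n-3)$ is exactly what is needed to grow the path to $n-1$ vertices.
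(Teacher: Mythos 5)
Your proof is correct and follows essentially the same route as the paper: assume no extension of $P$ is $d$-good, iterate Lemma~\ref{lem:extend} a total of $n-3$ times while the chromatic number of the attached component drops by at most $d$ per step, and then use the surviving nonempty attached component to append one more vertex and produce an induced $P_n$, a contradiction. Your version merely makes the induction and the bookkeeping $\chi(C_i)>d(n-3-i)$ explicit, which the paper compresses into ``by applying Lemma~\ref{lem:extend} $(n-3)$ times.''
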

\begin{proof}
  Suppose that $G$ has no $d$-good induced path extending $P$.
  By applying Lemma~\ref{lem:extend} $(n-3)$ times,
  we can find an induced path $P'$ of length $n-2$ extending $P$
  and a component $C'$ of $\Omega(G,P')$ attached to $P'$
  such that $\chi(C')\ge \chi(C)-  d(n-3)>0$.
  We obtain an induced path of length $n-1$
  by taking $P'$ and one vertex in $C'$ adjacent to the last vertex of $P'$. This contradicts the assumption that $G$ has
  no induced path on $n$ vertices.
\end{proof}

Now we are ready to prove Theorem~\ref{thm:pathhalf}. 
\begin{THMMAIN1}%
  Let $n\ge 4$.
  If a graph $G$ has no induced subgraph isomorphic to 
  $P_n$ or $K_{\lceil n/2\rceil}\tri \S_{\lceil n/2\rceil }$,
  then 
 \[\chi(G)\le (n-3)^{\lceil n/2\rceil-1} \omega(G)^{\lceil n/2\rceil-1}.\]
\end{THMMAIN1}
\begin{proof}
  Let $k=\omega(G)$.
  We may assume that $G$ is connected.
  Let $G_0=G$.
  Let $d_i= (n-3)^{\lceil n/2\rceil-i-1} k^{\lceil n/2\rceil-i-1}$.
  Note that $\chi(G_0)>d_0$.
  
  Inductively we will find, in $G_{i-1}$ of $\chi(G_{i-1})>d_{i-1}$,  an induced path $Q_i$
  and connected induced subgraphs $C_i$,  $G_i$ of $\chi(G_i)>d_i$ as follows.
  For $i=1,\ldots, \lceil n/2\rceil-1 $,
  by Lemmas~\ref{lem:start} and \ref{lem:dompath},
  $G_{i-1}$ has a $d_i$-good induced path $Q_{i}$
  of length at least $1$, because $d_{i-1}=d_i k(n-3)$.
  Let $C_i$ be a $d_i$-good component of $\Omega(G_{i-1},Q_i)$ attached to $Q_i$.
  Among all components of the subgraph of $C_i$ induced by the neighbors of the last vertex of $Q_i$,
  we choose a component $G_i$ of the maximum chromatic number.
  By the definition of a $d_i$-good component, $\chi(G_i)>d_i$.
  This constructs $G_1$, $G_2$, $\ldots$, $G_{\lceil n/2\rceil -1}$.

  As $\chi(G_{\lceil n/2\rceil -1})>d_{\lceil n/2\rceil-1}=1$,
  $G_{\lceil n/2\rceil -1}$ contains at least one edge $xy$.
  By collecting the last two vertices of
  $Q_1$, $Q_2$, $\ldots$, $Q_{\lceil n/2\rceil-1}$ and $x$, $y$,
  we obtain an induced subgraph
  isomorphic to $K_{\lceil n/2\rceil }\tri \S_{\lceil n/2\rceil}$,
  contradicting the assumption on $G$.
\end{proof}
	
\section{Discussions}\label{sec:p5}
Corollary~\ref{cor:path} and Theorem~\ref{thm:pathhalf}
show that 
\begin{equation}\label{eq:bound}
  \chi(G)\le (n-3)^{\lceil n/2\rceil-1} \omega(G)^{\lceil n/2\rceil-1}.
\end{equation}
for a graph $G$ with no vertex-minor isomorphic to $P_n$.
It is natural to ask the following question.
\begin{QUE}\label{q}
  Do there exist a constant $c$ and a function $f(n)$ such that
  \[\chi(G)\le f(n) \omega(G)^c\]
  for all integers $n$ and all graphs $G$ with no vertex-minor isomorphic to $P_n$?
\end{QUE}

If we replace the vertex-minor condition of $G$ in Question~\ref{q}
with the condition having no induced subgraph
isomorphic to $P_n$ or $K_{\lceil n/2\rceil}\tri \S_{\lceil n/2\rceil}$,
then the answer is no, by an argument in \cite{surveychi}.
We include its proof for the completeness.
\begin{PROP}\label{prop:q}
  For every constant $c$ and a function $f(n)$,
  there exist  a graph $G$ and an integer $n$ such that
  $\chi(G)>f(n)\omega(G)^c$
  and $G$ has no induced subgraph isomorphic to $P_n$ or $K_{\lceil n/2\rceil}\tri \S_{\lceil n/2\rceil}$.
\end{PROP}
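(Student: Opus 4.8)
The goal is to construct, for the given $c$ and $f$, a graph $G$ and integer $n$ with large chromatic number but bounded clique number, while forbidding both $P_n$ and $K_{\lceil n/2\rceil}\tri\S_{\lceil n/2\rceil}$ as induced subgraphs. The natural strategy is to invoke a known family of triangle-free graphs with unbounded chromatic number, since these immediately defeat any polynomial-in-$\omega$ bound: with $\omega(G)=2$, the term $f(n)\omega(G)^c=f(n)2^c$ is a fixed number once $n$ is chosen, whereas $\chi$ can be pushed arbitrarily high.

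\medskip

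The plan is as follows. First, I would fix a triangle-free graph family with unbounded chromatic number; the cleanest choice is the family of \emph{shift graphs}, which are triangle-free (so $\omega=2$), have chromatic number about $\log\log$ of their order, and crucially have bounded \emph{radius} or, more to the point, no long induced paths. Indeed, the reason shift graphs are relevant here (and the reason the survey \cite{surveychi} uses them) is that they contain no long induced path: there is a constant so that a shift graph on a ground set of size $N$ has no induced $P_n$ once $n$ exceeds a small bound independent of $N$, while $\chi$ grows with $N$. I would cite or reprove this bounded-induced-path property. Second, since $\omega(G)=2$ and $K_{\lceil n/2\rceil}\tri\S_{\lceil n/2\rceil}$ contains a clique of size $\lceil n/2\rceil\ge 2$, for $n\ge 4$ we have $\lceil n/2\rceil\ge 2$, so this subgraph has clique number at least $2$; in fact for $n\ge 5$ it has a triangle, and so a triangle-free graph automatically contains no induced $K_{\lceil n/2\rceil}\tri\S_{\lceil n/2\rceil}$ whenever $\lceil n/2\rceil\ge 3$, i.e.\ $n\ge 5$. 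Thus for $n\ge 5$ the second forbidden subgraph is handled for free.

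\medskip

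Concretely, I would proceed: given $c$ and $f$, choose $n\ge 5$ large enough that shift graphs avoid induced $P_n$ (this bound on $n$ is a fixed constant coming from the structure of shift graphs), and then choose a shift graph $G$ on a sufficiently large ground set so that $\chi(G)>f(n)\cdot 2^c=f(n)\omega(G)^c$; this is possible because $\chi$ of shift graphs is unbounded as the ground set grows while $\omega(G)=2$ stays fixed. Since $G$ is triangle-free, $\omega(G)=2$ and $G$ has no induced $K_{\lceil n/2\rceil}\tri\S_{\lceil n/2\rceil}$ (as that graph contains a triangle for $n\ge 5$), and by the choice of $n$ it has no induced $P_n$. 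This exhibits the desired counterexample.

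\medskip

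The main obstacle is pinning down precisely the bound on the induced-path length in the chosen triangle-free family, since that is what forces the constant $n$ independent of $\chi$; everything else is bookkeeping about clique numbers and the monotonicity of $\chi$ along the family. One must verify carefully that the specific triangle-free construction used truly has no long induced path (shift graphs do, with a small explicit bound), rather than merely bounded clique number, because a triangle-free graph of large chromatic number will in general contain long induced paths unless chosen with this in mind; so the crux of the proof is selecting the construction whose induced paths are short and citing or establishing that property.
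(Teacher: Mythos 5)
Your approach has a fatal gap: the central claim that shift graphs contain no long induced paths is false, and in fact no construction of the kind you describe can exist. In the shift graph on ground set $\{1,\ldots,N\}$ (vertices are pairs $(i,j)$ with $i<j$, and $(i,j)$ is adjacent to $(j,k)$), the vertices $(1,2),(2,3),(3,4),\ldots,(N-1,N)$ form an \emph{induced} path on $N-1$ vertices: consecutive pairs are adjacent, while two non-consecutive pairs $(i,i+1)$ and $(j,j+1)$ with $j\ge i+2$ share no endpoint in the required pattern and hence are non-adjacent. So shift graphs have induced paths whose length grows with $N$ --- exactly the opposite of what your argument needs. Worse, the obstruction is not specific to shift graphs: by Gy\'arf\'as's theorem that $P_n$-free graphs are $\chi$-bounded with $\chi\le (n-1)^{\omega-1}$ (and equally by this paper's own Theorem~\ref{thm:pathhalf}), any triangle-free graph with no induced $P_n$ has chromatic number at most $n-1$, a constant once $n$ is fixed. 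Hence for fixed $n$ there is no triangle-free, $P_n$-free family with unbounded chromatic number, and your plan of freezing $\omega=2$ and $n$ while sending $\chi\to\infty$ is impossible in principle. The entire content of the proposition is that the exponent in a polynomial $\chi$-bounding function must grow with $n$; this cannot be exhibited while keeping $\omega$ bounded.

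The correct move --- and the paper's proof --- is to bound the \emph{independence} number rather than the clique number. Spencer's lower bound on Ramsey numbers, $R(k,t)>t^{(k-1)/2+o(1)}$, yields for fixed $k$ and all large $t$ a graph $G^{(k)}_t$ with no stable set of size $k$, with $\omega(G^{(k)}_t)<t$, and with more than $t^{(k-1)/2+o(1)}$ vertices; since every color class is a stable set, $\chi(G^{(k)}_t)\ge \abs{V(G^{(k)}_t)}/(k-1)$, which exceeds $f(2k)\,t^c$ once $k$ is chosen with $(k-1)/2>c+1$ and $t$ is large. Both $P_{2k}$ and $K_k\tri\S_k$ contain stable sets of size $k$, so neither can occur as an induced subgraph of $G^{(k)}_t$; taking $n=2k$ and $G=G^{(k)}_t$ finishes the proof. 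Note how this sidesteps the difficulty you ran into: here $\omega$ is allowed to grow with $t$, and the two forbidden induced subgraphs are excluded because they contain large \emph{independent} sets, not because they contain triangles.
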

\begin{proof}
  There are known lower bounds on Ramsey numbers; for instance, Spencer~\cite{Spencer1975} showed that
  for fixed $k$, $R(k,t)>t^{(k-1)/2+o(1)}$ for all sufficiently large $t$.
  It implies that for fixed $k$, for all sufficiently large $t$,
  there exists a graph $G^{(k)}_{t}$ having no stable set of size $k$
  and $\omega(G^{(k)}_t)<t$, $\abs{V(G^{(k)}_t)}>t^{(k-1)/2+o(1)}$.
  Then $\chi(G^{(k)}_t)\ge t^{(k-1)/2+o(1)}/k $.

  Fix an integer $k$ so that $(k-1)/2>c+1$.
  Let $c':=f(2k)$.
  Then for all sufficiently large $t$,
  $\chi(G^{(k)}_t)> t^{(k-1)/2+o(1)}/k \ge c' t^c$.
  Then for all sufficiently large $t$,
  $\chi(G^{(k)}_t)>c' \omega(G)^c$ and 
  $G^{(k)}_t$ has no induced subgraph isomorphic to $P_{2k}$
  or $K_{k}\tri \S_k$.
\end{proof}

The bound \eqref{eq:bound} cannot be improved for $n=4$, as $\chi(G)\ge \omega(G)$ in general.
We will present the best possible bound for $n=5$.
\begin{THM}\label{THM:p5free}
  If a graph $G$ has no vertex-minor isomorphic to $P_5$,
  then
  \[\chi(G)\le \omega(G)+1.\]
\end{THM}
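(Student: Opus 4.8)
The plan is to combine the elementary forbidden induced subgraphs forced by the vertex-minor hypothesis with the composition-tree ($1$-join) decomposition of Section~\ref{sec:proof}, and then to sharpen the coloring argument so that passing through a $1$-join costs only one additive color rather than a multiplicative factor.

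First I would record what excluding $P_5$ as a vertex-minor gives for free. Since every induced subgraph is a vertex-minor, $G$ has no induced $P_5$; and by Lemma~\ref{lem:ko} with $n=5$ the graph $K_3\tri\S_3$ has a vertex-minor isomorphic to $P_5$, so $G$ has no induced $K_3\tri\S_3$ either. The same reasoning shows $G$ has no odd hole of length at least $7$, because $C_{2k+1}$ with $k\ge 3$ contains an induced $P_5$. This already pins down the right target: the only odd hole that survives is $C_5$, and $C_5$ itself lies in the class. Indeed, any vertex-minor of $C_5$ on all five vertices is locally equivalent to $C_5$, and a finite check shows no graph locally equivalent to $C_5$ is isomorphic to $P_5$. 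As $\chi(C_5)=3=\omega(C_5)+1$, the bound is best possible, and the ``$+1$'' is there precisely to account for induced copies of $C_5$.

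The structural heart is a classification of the prime pieces. Reducing to connected $G$ and applying Lemma~\ref{lem:composition}, I obtain a composition tree whose parts $\phi(t)$ are connected prime induced subgraphs of $G$, each again free of a $P_5$ vertex-minor. I would then prove that such a prime part is either perfect, giving $\chi=\omega$, or is locally equivalent to $C_5$ (so its clique number is at most $3$ and it satisfies $\chi\le\omega+1$ through an induced $C_5$). The engine here is local complementation: twins and splits are forbidden in a prime part, so one tries to expose an induced $P_5$ after a short sequence of local complementations in any prime part that is neither perfect nor of $C_5$-type. Pinning down this dichotomy exactly---ruling out all larger imperfect prime graphs by producing a $P_5$ vertex-minor---is where I expect the case analysis to concentrate.

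Finally I would color $G$ by induction on the composition tree, in the spirit of Lemma~\ref{LEM:main} but tuned to waste a single color \emph{globally}. The key arithmetic is the additive clique bound across a $1$-join: if $G$ is the $1$-join of $(G_1,v_1)$ and $(G_2,v_2)$, then $\omega(G)\ge\omega(G_1[N_{G_1}(v_1)])+\omega(G_2[N_{G_2}(v_2)])$, because a clique in one joined neighborhood together with a clique in the other forms a clique of $G$. So if each neighborhood can be colored with exactly $\omega$ colors, the complete bipartite interface uses at most $\omega(G)$ colors, leaving one spare color to absorb the imperfection contributed by the $C_5$-type parts. The main obstacle I anticipate is precisely this bookkeeping. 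The generic $1$-join coloring of Lemma~\ref{LEM:main} takes a product $\alpha'\times\beta'$ and therefore \emph{multiplies} the palette; here I must instead force the colorings of the two sides to share a common palette on their neighborhoods, so that the many local ``$+1$'' slacks coming from the individual $C_5$-type pieces collapse into one global extra color. Showing that the neighborhoods $G[N_G(w)]$ are perfectly colorable and that the reserved color never needs to be duplicated---rather than falling back on a multiplicative bound as in Theorem~\ref{THM:main}---is where the real work lies.
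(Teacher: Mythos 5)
Your plan has two genuine gaps, and the second one is fatal to the decomposition route itself. First, your dichotomy for the prime pieces is false: the wheel $W_5$ on six vertices is prime, imperfect, has no vertex-minor isomorphic to $P_5$, and is certainly not locally equivalent to $C_5$, since local equivalence preserves the number of vertices. Indeed $W_5$ attains the bound of the theorem ($\omega=3$, $\chi=4$), so any correct classification of the connected (in particular prime) graphs in the class must list it as an exception; this is exactly what the paper's Proposition~\ref{PROP:P5free} does: every connected graph with no $P_5$ vertex-minor is perfect or isomorphic to $C_5$ or $W_5$.

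Second, once $W_5$ is admitted among the parts, no additive bookkeeping across $1$-joins can succeed, because the recombination statement you are aiming for is false for the $1$-join closure: the $1$-join of two copies of $W_5$ at their hubs is the complete join of two $C_5$'s, which has $\omega=4$ and $\chi=6=\omega+2$. Of course that graph does have a $P_5$ vertex-minor, but that is precisely the point: the class is a \emph{proper} subclass of the $1$-join closure of its prime pieces, so the bound $\chi\le\omega+1$ cannot be inherited from the parts through arbitrary $1$-joins; your coloring lemma would have to know which joins actually occur, which is tantamount to redoing the direct structural analysis. The paper avoids both problems by proving Proposition~\ref{PROP:P5free} directly, with no composition tree and no coloring induction: the graphs locally equivalent to $P_5$ (namely $W_4$, $W_4'$, the banner, bull, dart, HVN and kite) are forbidden as induced subgraphs; if $G$ is moreover $C_5$-free, these exclusions kill all odd holes and odd antiholes, so $G$ is perfect by the strong perfect graph theorem; and if $G$ contains an induced $C_5$, a distance-layering argument around that $C_5$ forces $G\in\{C_5,W_5\}$. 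The theorem then follows immediately. Your first paragraph (no induced $P_5$ or $K_3\tri\S_3$, no long odd holes, $C_5$ as the tight example) is fine, but the heart of the argument as you propose it cannot be completed.
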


The following proposition trivially implies Theorem~\ref{THM:p5free}.
We denote by $W_n$ the wheel graph on $n+1$ vertices.

\begin{PROP}\label{PROP:P5free}
  Every graph with no vertex-minor isomorphic to $P_5$
  is perfect,
  unless it has a component isomorphic to $C_5$ or $W_5$.
\end{PROP}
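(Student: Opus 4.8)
The plan is to deduce the statement from the Strong Perfect Graph Theorem. First I would reduce to the connected case: the class of graphs with no $P_5$ vertex-minor is closed under induced subgraphs, hence under taking components, and a graph is perfect if and only if each component is; so it suffices to show that a connected graph $G$ with no vertex-minor isomorphic to $P_5$ is either perfect or isomorphic to $C_5$ or $W_5$. Assume $G$ is connected, $P_5$-vertex-minor-free, and imperfect. By the Strong Perfect Graph Theorem $G$ has an induced odd hole $C_m$ or an induced odd antihole $\overline{C_m}$ with $m\ge 5$ odd. The strategy is to show that the only such obstruction avoiding a $P_5$ vertex-minor is an induced $C_5$, and then to read off the structure around it.

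For holes this is immediate: if $m\ge 6$ then deleting one vertex of $C_m$ leaves an induced $P_{m-1}$, hence an induced $P_5$. Antiholes are the first real difficulty, because for $m\ge 6$ the graph $\overline{C_m}$ has no induced $P_5$ at all (an endpoint of an induced $P_5$ has three non-neighbours inside it, whereas every vertex of $\overline{C_m}$ has only two non-neighbours), and a long antihole contains no shorter antihole as an induced subgraph, so one needs a uniform argument. I would resolve this with a single local complementation: writing $v_0,\dots,v_{m-1}$ for the cyclic order, the neighbourhood of $v_0$ in $\overline{C_m}$ is the consecutive block $\{v_2,\dots,v_{m-2}\}$, which induces the complement of a path; after $\overline{C_m}\ast v_0$ this induced subgraph becomes the genuine path $P_{m-3}$, and deleting $v_0,v_1,v_{m-1}$ leaves an induced $P_{m-3}$. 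For $m\ge 8$ this contains $P_5$, and the one remaining odd antihole $\overline{C_7}$ is handled by an explicit short sequence (one local complementation followed by a pivot already exposes an induced $P_5$). Thus no odd hole or antihole other than $C_5$ can occur, so $G$ has an induced $C_5$, say on $S=\{c_1,\dots,c_5\}$.

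The core of the argument is then a local analysis of $S$. If $V(G)=S$ then $G=C_5$, so assume otherwise; by connectivity there is a vertex adjacent to $S$. The key claim is that any vertex $v\notin S$ with a neighbour in $S$ is adjacent to \emph{all} of $S$. Up to the dihedral symmetry of $C_5$ a vertex can meet a nonempty proper subset of $S$ in only six ways (one vertex; two consecutive; two non-consecutive; three consecutive; three in a $2{+}1$ pattern; four vertices), and I would check that each forces a $P_5$ vertex-minor of the six-vertex graph $G[S\cup\{v\}]$. The first two already contain an induced $P_5$; the other four are reduced to an induced $P_5$ by at most two local complementations and a deletion — for instance a pivot on an edge from $v$ to one of its neighbours in $S$ turns the two-non-consecutive case into a graph with a pendant vertex and an induced $P_5$, a local complementation at the central rim-neighbour handles the three-consecutive case, a local complementation at $v$ followed by deleting the resulting dominating vertex handles the $2{+}1$ case, and one local complementation reduces the four-neighbour case to the $2{+}1$ case. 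This finite but fiddly case check is the step I expect to be the main obstacle. Granting the claim, every neighbour of $S$ is a \emph{hub} adjacent to all of $S$; picking one hub $h$ gives an induced $W_5$ on $S\cup\{h\}$, and if $G$ had any further vertex then, by connectivity, some $u$ outside $S\cup\{h\}$ would be adjacent to $S\cup\{h\}$: if $u$ meets $S$ it is either a non-hub (contradicting the claim) or a second hub, and if $u$ meets only $h$ then $G[S\cup\{h,u\}]$ is a ``hub with a pendant'', which is locally equivalent to two hubs and again has a $P_5$ vertex-minor. Hence $V(G)=S\cup\{h\}$ and $G=W_5$.

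Finally I would record that the two exceptions are genuine, i.e.\ that neither $C_5$ nor $W_5$ has a $P_5$ vertex-minor: this is a direct computation of small local-equivalence classes (the orbit of $C_5$ under local complementation consists only of $C_5$, the house $\overline{P_5}$, and the $5$-vertex fan, none of which is $P_5$, and the identity $W_5\ast h\cong W_5$ keeps the check for $W_5$ short). Since a connected $P_5$-vertex-minor-free graph is thereby perfect unless it is $C_5$ or $W_5$, the proposition follows.
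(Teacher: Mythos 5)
Your proposal follows the same route as the paper's proof: reduce to a connected graph, use the Strong Perfect Graph Theorem to show that the only obstruction to perfection compatible with excluding $P_5$ as a vertex-minor is an induced $C_5$, and then analyse how the rest of the graph attaches to that $C_5$. The difference is stylistic: the paper first compiles a list of five-vertex graphs locally equivalent to $P_5$ ($W_4$, $W_4'$, HVN, bull, kite, banner, dart; its Figure~2) and afterwards only needs to spot them as induced subgraphs, whereas you re-derive explicit local complementation sequences case by case. Your six attachment cases are all correct; I checked in particular that pivoting on an edge from $v$ in the two-non-consecutive case, locally complementing at the central rim-neighbour in the three-consecutive case, locally complementing at $v$ and deleting the resulting dominating vertex in the $2{+}1$ case, and locally complementing at a suitable rim neighbour in the four-neighbour case all work as you describe. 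Your antihole argument ($\overline{C_m}\ast v_0$ has an induced $P_{m-3}$) is a clean uniform alternative to the paper's observation that $\overline{C_k}$ contains an induced $W_4'$ for odd $k\ge 7$; note, though, that your $\overline{C_7}$ case is only asserted, and the quickest fix is the paper's: $C_7$ contains the disjoint union of $P_2$ and $P_3$ as an induced subgraph, so $\overline{C_7}$ contains an induced $W_4'$, which is locally equivalent to $P_5$.

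The genuine gap is in the final step. Having produced one hub $h$, you consider a further vertex $u$ attached to $S\cup\{h\}$ and write that if $u$ meets $S$ it "is either a non-hub (contradicting the claim) or a second hub" --- and the second-hub alternative is never eliminated. The later clause "again has a $P_5$ vertex-minor" quietly assumes the unproved statement that $C_5$ together with two vertices complete to it has a $P_5$ vertex-minor; without that statement your argument is consistent with $G$ being a $C_5$ plus arbitrarily many hubs, and $G=W_5$ does not follow. (The same unproved statement is also what your "hub with a pendant" case reduces to after the local complementation at $h$.) The statement is true, but it requires two further forbidden-configuration checks, which are exactly the paper's steps "$L_1$ is stable" and "$|L_1|=1$": if two hubs are adjacent, then together with $c_1,c_2,c_4$ they induce an HVN ($K_4$ plus a vertex of degree two); if they are non-adjacent, then together with $c_1,c_2,c_3$ they induce a $W_4$ with hub $c_2$; both graphs are locally equivalent to $P_5$. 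Inserting these two checks closes the gap and completes your proof. (Your closing verification that $C_5$ and $W_5$ genuinely have no $P_5$ vertex-minor is not needed for the proposition as stated, so its incompleteness --- e.g.\ local complementations at rim vertices of $W_5$ are not covered by $W_5\ast h\cong W_5$ --- is harmless.)
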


In order to prove Proposition~\ref{PROP:P5free}, 
we need to define the following graph classes. See Figure~\ref{FIG:p5free} for an illustration.
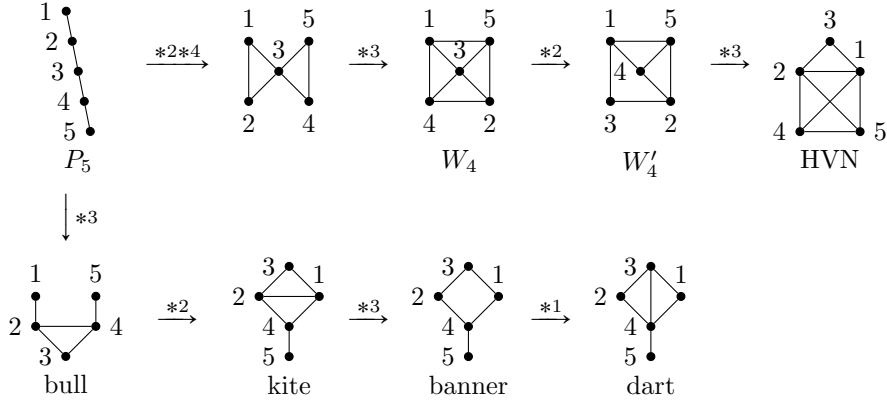
\begin{figure}
  \tikzstyle{v}=[circle, draw, solid, fill=black, inner sep=0pt, minimum width=3pt]
  \[
    \minCDarrowwidth0.5em
    \begin{CD}
    \begin{tikzpicture}[baseline=(v3),scale=.8]
      \foreach \i in {1,2,3,4,5}
      {
        \node [v,label=left:$\i$] (v\i) at (0.1*\i,-0.5*\i) {};
      }
      \draw (v1)--(v2)--(v3)--(v4)--(v5);
      \node [label=below:$P_5$] at(0.3,-2.5){};
    \end{tikzpicture}
    @>*2*4>>
        \begin{tikzpicture}[baseline=(v3),scale=.8]
          \node [v,label=$1$] (v1) at (0,1) {};
          \node [v,label=below:$2$] (v2) at (0,0) {};
          \node [v,label=$3$] (v3) at (0.5,0.5) {};
          \node [v,label=below:$4$] (v4) at (1,0) {};
          \node [v,label=$5$] (v5) at (1,1) {};
          \draw (v1)--(v2)--(v3)--(v4)--(v5);
          \draw (v1)--(v3);
          \draw (v3)--(v5);
        \end{tikzpicture}
        @>*3>>
        \begin{tikzpicture}[baseline=(v3),scale=.8]
          \node [v,label=$1$] (v1) at (0,1) {};
          \node [v,label=below:$2$] (v2) at (1,0) {};
          \node [v,label=$3$] (v3) at (0.5,0.5) {};
          \node [v,label=below:$4$] (v4) at (0,0) {};
          \node [v,label=$5$] (v5) at (1,1) {};
          \draw (v1)--(v5)--(v2)--(v4)--(v1);
          \foreach \i in {1,2,4,5}
          {\draw (v3)--(v\i);}
          \node [label=below:$W_4$] at (0.5,-.5){};
        \end{tikzpicture}
        @>*2>>
        \begin{tikzpicture}[baseline=(v4),scale=.8]
          \node [v,label=$1$] (v1) at (0,1) {};
          \node [v,label=below:$2$] (v2) at (1,0) {};
          \node [v,label=below:$3$] (v3) at (0,0) {};
          \node [v,label=left:$4$] (v4) at (0.5,0.5) {};
          \node [v,label=$5$] (v5) at (1,1) {};
          \draw (v1)--(v5)--(v2)--(v3)--(v1);
          \foreach \i in {1,2,5}
          {\draw (v4)--(v\i);}
          \node [label=below:$W_4'$] at (0.5,-.5){};
        \end{tikzpicture}
        @>*3>>
        \begin{tikzpicture}[baseline=(v2),scale=.8]
          \node [v,label=$1$] (v1) at (1,1){};
          \node [v,label=left:$2$] (v2) at (0,1){};
          \node [v,label=$3$] (v3) at  (0.5,1.5){};
          \node [v,label=left:$4$] (v4) at (0,0){};
          \node [v,label=right:$5$] (v5) at (1,0) {};
          \draw (v1)--(v5)--(v2)--(v3)--(v1);
          \foreach \i in {1,2,5}
          {\draw (v4)--(v\i);}
          \node [label=below:HVN] at (0.5,0){};
          \draw (v1)--(v2);
        \end{tikzpicture}
        \\
        @VV{*3}V\\
        \begin{tikzpicture}[baseline=(v2),scale=.8]
          \node [v,label=$1$] (v1) at (0,.5) {};
          \node [v,label=left:$2$] (v2) at (0,0) {};
          \node [v,label=left:$3$] (v3) at (0.5,-0.5) {};
          \node [v,label=right:$4$] (v4) at (1,0) {};
          \node [v,label=$5$] (v5) at (1,.5) {};
          \draw (v1)--(v2)--(v3)--(v4)--(v5);
          \draw(v2)--(v4);
          \node [label=below:bull] at (0.5,-.5){};
        \end{tikzpicture}
        @>*2>>
        \begin{tikzpicture}[baseline=(v4),scale=.8]
          \node [v,label=$1$] (v1) at (.5,0) {};
          \node [v,label=left:$2$] (v2) at (-.5,0) {};
          \node [v,label=left:$3$] (v3) at (0,.5) {};
          \node [v,label=left:$4$] (v4) at (0,-.5) {};
          \node [v,label=left:$5$] (v5) at (0,-1) {};
          \draw (v1)--(v3)--(v2)--(v4)--(v5);
          \draw (v4)--(v1)--(v2);
          \node [label=below:kite] at (0,-1){};
        \end{tikzpicture}
      @>*3>>
        \begin{tikzpicture}[baseline=(v4),scale=.8]
          \node [v,label=$1$] (v1) at (.5,0) {};
          \node [v,label=left:$2$] (v2) at (-.5,0) {};
          \node [v,label=left:$3$] (v3) at (0,.5) {};
          \node [v,label=left:$4$] (v4) at (0,-.5) {};
          \node [v,label=left:$5$] (v5) at (0,-1) {};
          \draw (v1)--(v3)--(v2)--(v4)--(v5);
          \draw (v4)--(v1);
          \node [label=below:banner] at (0,-1){};
        \end{tikzpicture}
      @>*1>>
      \begin{tikzpicture}[baseline=(v4),scale=.8]
          \node [v,label=$1$] (v1) at (.5,0) {};
          \node [v,label=left:$2$] (v2) at (-.5,0) {};
          \node [v,label=left:$3$] (v3) at (0,.5) {};
          \node [v,label=left:$4$] (v4) at (0,-.5) {};
          \node [v,label=left:$5$] (v5) at (0,-1) {};
          \draw (v1)--(v3)--(v2)--(v4)--(v5);
          \draw (v3)--(v4)--(v1);
          \node [label=below:dart] at (0,-1){};
        \end{tikzpicture}
      \end{CD}
  \]
  \caption{Some graphs locally equivalent to $P_5$.}
  \label{FIG:p5free}
\end{figure}

\begin{itemize}
\item $W_4'$: the graph obtained from $W_4$ by deleting a spoke.
\item Banner: the graph obtained from $C_4$ by adding a pendant edge.
\item Bull: the graph obtained from $C_3$ by adding two pendant edges to distinct vertices of $C_3$.
\item Dart: the graph obtained from $K_4\setminus e$ for some edge $e$ of $K_4$ by adding a pendant edge to a vertex of degree $3$.
\item HVN: the graph obtained from $K_4$ by adding a vertex of degree $2$.
\item Kite: the graph obtained from $K_4\setminus e$ for some edge $e$ of $K_4$ by adding a pendant edge to a divalent vertex.
\end{itemize}

We say that $G$ is \emph{$H$-free}
if $G$ has no induced subgraph isomorphic to $H$.
We write $\overline{G}$ to denote the \emph{complement} of a graph $G$.
\begin{proof}[Proof of Proposition~\ref{PROP:P5free}]
  From Figure~\ref{FIG:p5free}, it is easy to check that
  $W_4$, $W_4'$, a banner, a bull, a dart, an HVN and a kite are locally equivalent to $P_5$.
  Therefore, $G$ has no induced subgraph isomorphic
  to any of those graphs.

  We may assume that $G$ is connected.
  If $G$ is $C_5$-free, 
  then $G$ does not contain an odd hole because $G$ is $P_5$-free.
  Since $\overline{C_5}$ is isomorphic to $C_5$, $G$ is $\overline{C_5}$-free. 
  In addition, since $\overline{W_4'}$ is the disjoint union of $P_2$ and $P_3$, $G$ is $\overline{C_k}$-free for every odd $k\ge 7$.
  Therefore $G$ is perfect by the strong perfect graph theorem~\cite{CRST2006}.
  
  Now we may assume that 
  $G$ contains $C_5$ as an induced subgraph.
  Let $L_i$ be the set of vertices of $G$ having the distance $i$ to $C_5$.
We may assume that $G$ is not $C_5$, that is, $L_1$ is not empty.

We claim that $L_1$ is complete to $L_0$.
Suppose $v \in L_1$ is not complete to $L_0$. Then $v$ has exactly 1, 2, 3 or 4 neighbors in $L_0$. In each case it is easy to check that we can find an induced subgraph isomorphic to $P_5$, a bull, a banner or a kite, a contradiction. 

Now we claim that $L_2=\emptyset$.
Suppose $v \in L_2$. Let $u \in L_1$ such that $uv$ is an edge. Now we see that $G$ contains a dart, a contradiction.

If two vertices $u$, $v$ in $L_1$ are adjacent, then $G$ contains a HVN as an induced subgraph, a contradiction.  Thus, $L_1$ is stable.

If $L_1$ contains more than one vertex, then $G$ contains $W_4$, a contradiction. So $\abs{L_1}=1$, and so $G=W_5$.
\end{proof}

Finally let us discuss pivot-minors instead of vertex-minors. 
We may ask the following question, which is stronger than Question~\ref{q:polygeelen}.
\begin{QUE}\label{q:pivot}
  Is it true that for every graph $H$,
  the class of graphs with no pivot-minor isomorphic to $H$ is polynomially
  $\chi$-bounded?
\end{QUE}
We will show that this is true if $H$ is a path.
Here is a useful lemma, replacing Lemma~\ref{lem:ko}.
\begin{LEM}
  The graph $K_n\tri \S_n$ has a pivot-minor isomorphic to $P_{n+1}$.
\end{LEM}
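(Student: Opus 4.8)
The plan is to exhibit an explicit sequence of local complementations and vertex deletions that transforms $K_n\tri\S_n$ into $P_{n+1}$. Recall that $K_n\tri\S_n$ has clique vertices $a_1,\ldots,a_n$, stable-set vertices $b_1,\ldots,b_n$, and $a_i\sim b_j$ iff $i\ge j$. A useful first observation is that the pivot $G\pivot uv$ swaps the roles of $u$ and $v$ while toggling adjacencies between the three sets $N(u)\setminus N(v)$, $N(v)\setminus N(u)$, and $N(u)\cap N(v)$; this is the standard rule I would invoke to compute each step. Since a pivot is built from three local complementations, and since Lemma~\ref{lem:ko} already shows $K_{\lceil n/2\rceil}\tri\S_{\lceil n/2\rceil}$ has a \emph{vertex-minor} isomorphic to $P_n$, the present lemma is a strengthening asserting that the reduction can be done using only pivots (not arbitrary local complementations) and deletions.

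First I would try to find the right ordering of the $2n$ vertices that will become the path $P_{n+1}$ on $n+1$ vertices. The natural guess is that after pivoting, the vertices arrange so that consecutive indices in the $a$/$b$ interleaving form the path. Concretely, I expect to pivot along a carefully chosen edge, say $a_1b_1$ (note $a_1\sim b_1$ since $1\ge 1$, and $a_1$'s only stable-set neighbor is $b_1$), compute the resulting adjacencies, delete the vertices that have become isolated or redundant, and observe that the structure on the remaining vertices is again of a similar ``staircase'' form but smaller, allowing an induction. The threshold/staircase adjacency pattern $i\ge j$ is exactly the kind of structure that behaves predictably under pivoting, so I would aim to set up an inductive claim: after one pivot and one deletion, $K_n\tri\S_n$ reduces to something containing $K_{n-1}\tri\S_{n-1}$ as a pivot-minor together with two extra path-endpoint vertices.

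The cleanest route is probably a direct induction on $n$. For the base case $n=1$, $K_1\tri\S_1$ is a single edge $a_1b_1=P_2=P_{n+1}$, so there is nothing to do. For the inductive step I would pivot at an edge incident to the ``extreme'' vertices (those with the fewest or most neighbors, namely $a_n$, which dominates all of $b_1,\ldots,b_n$, or $b_n$, whose only clique-neighbor is $a_n$), show that this peels off one path vertex and leaves a graph from which $K_{n-1}\tri\S_{n-1}$ is obtained by deleting the pivoted endpoints, and then append the newly produced vertex to the path supplied by the induction hypothesis. The bookkeeping must confirm that the appended vertex is adjacent to exactly one end of the inductively obtained $P_n$ and to nothing else, yielding $P_{n+1}$.

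The main obstacle I anticipate is the adjacency bookkeeping: after each pivot the threshold pattern $i\ge j$ gets perturbed in a way that must be tracked exactly, and I need to verify that the perturbation is benign enough that the remaining graph is genuinely again of the staircase type (up to relabeling and deletion), rather than some more complicated graph. Getting the pivot edge and the deleted vertices chosen so that the induction closes cleanly — so that precisely one new edge attaches the fresh vertex to an endpoint of the previous path and no stray edges survive — is where the real care lies, and I would verify it first on small cases ($n=2,3$) to pin down the correct choice before writing the general step.
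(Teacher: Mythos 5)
Your proposal is a plan rather than a proof: the entire content of the lemma is the adjacency bookkeeping that you explicitly defer (``I would verify it first on small cases \ldots before writing the general step''), so nothing is actually established. Worse, the specific pivot edges you propose for starting the induction are exactly the ones that do nothing. In $K_n\tri\S_n$ we have $N(a_1)=\{b_1,a_2,\ldots,a_n\}$ and $N(b_1)=\{a_1,a_2,\ldots,a_n\}$, so for the pivot at $a_1b_1$ the sets $N(a_1)\setminus(N(b_1)\cup\{b_1\})$ and $N(b_1)\setminus(N(a_1)\cup\{a_1\})$ are both empty: no edge is toggled, and since $a_1$ and $b_1$ have identical neighborhoods outside the edge, $(K_n\tri\S_n)\pivot a_1b_1 = K_n\tri\S_n$. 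Similarly $N(b_n)=\{a_n\}$, so the pivot at $a_nb_n$ toggles nothing and merely exchanges the labels of $a_n$ and $b_n$, producing a graph isomorphic to the original. Hence the induction you sketch cannot ``peel off'' a path vertex with either of these choices; the extreme edges are precisely the degenerate pivots.

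The paper instead pivots at the \emph{interior} edges: it forms $(K_n\tri\S_n)\pivot a_2b_2\pivot a_3b_3\pivot\cdots\pivot a_{n-1}b_{n-1}$ and observes that $a_1a_2\cdots a_nb_n$ is an induced path on $n+1$ vertices in the resulting graph (the vertices $b_1,\ldots,b_{n-1}$ are then deleted). Each pivot at $a_ib_i$ with $1<i<n$ genuinely changes the graph --- it complements the edges between $\{a_j : j>i\}$ and $\{a_j : j<i\}\cup\{b_j : j<i\}$ --- and this is what dismantles the staircase into a path. If you want to keep an inductive framing, the correct invariant is a structural description of $(K_n\tri\S_n)\pivot a_2b_2\pivot\cdots\pivot a_kb_k$ as $k$ grows, not a reduction to $K_{n-1}\tri\S_{n-1}$ via pivots at $a_1b_1$ or $a_nb_n$.
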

\begin{proof}
  Let us remind that $\{a_1,a_2,\ldots,a_n\}$ is a clique and $\{b_1,b_2,\ldots,b_n\}$ is an independent set and $a_i$ is adjacent to $b_j$ if and only if $i\ge j$.
  It is easy to observe that 
  $(K_n\tri\S_n)\pivot a_2b_2\pivot a_3b_3\pivot a_4b_4 \pivot\cdots\pivot a_{n-1}b_{n-1}$
  has an induced path $a_1a_2a_3\cdots a_nb_n$ of length $n$.
\end{proof}
So we deduce the following strengthening of Corollary~\ref{cor:path} by the same proof.
\begin{PROP}
  The class of graphs with no pivot-minor isomorphic to $P_n$
  is polynomially $\chi$-bounded.
\end{PROP}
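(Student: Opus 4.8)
The plan is to mimic the deduction of Corollary~\ref{cor:path} from Theorem~\ref{thm:pathhalf}, replacing vertex-minors by pivot-minors throughout. The two facts I would use about pivot-minors are that every induced subgraph of a graph is a pivot-minor of it (deletions alone are permitted) and that being a pivot-minor is transitive, so a pivot-minor of an induced subgraph of $G$ is again a pivot-minor of $G$. In particular, the class of graphs with no pivot-minor isomorphic to $P_n$ is hereditary, so it suffices to exhibit a single polynomial $p$ with $\chi(G)\le p(\omega(G))$ for every $G$ in the class; applying this bound to induced subgraphs, which again lie in the class, then yields polynomial $\chi$-boundedness. The strategy, as in Corollary~\ref{cor:path}, is to forbid two induced subgraphs and invoke Theorem~\ref{thm:pathhalf}.

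The one genuine difference from the vertex-minor case lies in the index bookkeeping, and this is the step I expect to require the most care. In Lemma~\ref{lem:ko} the graph $K_{\lceil n/2\rceil}\tri\S_{\lceil n/2\rceil}$ already has $P_n$ as a vertex-minor, so the forbidden subgraphs $P_n$ and $K_{\lceil n/2\rceil}\tri\S_{\lceil n/2\rceil}$ in Theorem~\ref{thm:pathhalf} match the parameter $n$ exactly. For pivot-minors, however, the preceding lemma only extracts $P_{m+1}$ from $K_m\tri\S_m$, so the extraction is essentially half as efficient, and I cannot invoke Theorem~\ref{thm:pathhalf} at the parameter $n$. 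Instead I would apply Theorem~\ref{thm:pathhalf} with its parameter replaced by $m:=2n-2$; the key arithmetic is that $\lceil m/2\rceil = n-1$, so the two forbidden induced subgraphs there become precisely $P_{2n-2}$ and $K_{n-1}\tri\S_{n-1}$.

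With this choice the argument closes quickly. Suppose $G$ has no pivot-minor isomorphic to $P_n$. Then $G$ has no induced $P_n$, hence no induced $P_{2n-2}$, since $P_{2n-2}$ contains $P_n$ as an induced subgraph for $n\ge 2$. Moreover, setting the parameter of the preceding lemma to $n-1$ shows that $K_{n-1}\tri\S_{n-1}$ has a pivot-minor isomorphic to $P_{(n-1)+1}=P_n$; were $K_{n-1}\tri\S_{n-1}$ an induced subgraph of $G$, transitivity would produce $P_n$ as a pivot-minor of $G$, a contradiction, so $G$ has no induced $K_{n-1}\tri\S_{n-1}$. Theorem~\ref{thm:pathhalf} applied at $m=2n-2$ then gives $\chi(G)\le (2n-5)^{n-2}\omega(G)^{n-2}$, a polynomial in $\omega(G)$, for all $n\ge 3$; the cases $n\le 2$ are trivial, since the class then consists of edgeless graphs or only the null graph. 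The main obstacle is exactly the observation that the weaker pivot-minor extraction forces the parameter $2n-2$ in place of $n$, together with the verification $\lceil (2n-2)/2\rceil = n-1$ so that Theorem~\ref{thm:pathhalf} forbids the intended pair of induced subgraphs; everything else is routine.
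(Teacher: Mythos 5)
Your proof is correct and is essentially the paper's own argument: the paper deduces this proposition ``by the same proof'' as Corollary~\ref{cor:path}, using the preceding lemma that $K_{n-1}\tri\S_{n-1}$ has a pivot-minor isomorphic to $P_n$, and your parameter shift to $2n-2$ (so that $\lceil (2n-2)/2\rceil = n-1$) is exactly the bookkeeping needed to invoke Theorem~\ref{thm:pathhalf}. The paper leaves these details implicit (one could also take $2n-3$ for a marginally better bound), but the route --- forbid suitable $P_m$ and $K_{n-1}\tri\S_{n-1}$ as induced subgraphs, then apply Theorem~\ref{thm:pathhalf} --- is identical.
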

However, the analogue of Theorem~\ref{THM:pathtocycle} is false for pivot-minors.
As mentioned in \cite{CKO2015},
if $k\not\equiv n\pmod 2$, then 
$C_k$ has no pivot-minor isomorphic to $C_n$.
This implies that there is a prime graph with an arbitrary long induced path
having no pivot-minors isomorphic to $C_n$.

\paragraph{Acknowledgement.}
The authors would like to thank the anonymous reviewer for suggesting
various improvements and in particular,
Question~\ref{q} and Proposition~\ref{prop:q}.
\providecommand{\bysame}{\leavevmode\hbox to3em{\hrulefill}\thinspace}
\providecommand{\MR}{\relax\ifhmode\unskip\space\fi MR }
\providecommand{\MRhref}[2]{%
  \href{http://www.ams.org/mathscinet-getitem?mr=#1}{#2}
}
\providecommand{\href}[2]{#2}

\end{document}